\pgfplotsset{compat=1.18}
\theoremstyle{definition}
\newtheorem{theorem}{Theorem}
\newtheorem{remark}{Remark}
\newtheorem{assumption}{Assumption}
\definecolor{plot-red}{rgb}{0.9,0.3,0.3}
\definecolor{plot-blue}{rgb}{0.2,0.6,0.9}
\definecolor{plot-darkblue}{rgb}{0.1,0.4,0.8}
\definecolor{plot-green}{rgb}{0.3,0.8,0.3}
\definecolor{plot-purple}{rgb}{0.8,0.3,0.8}
\tikzstyle{block}=[rectangle, draw=black, fill=white, rounded corners = 3pt, inner sep = 4pt, thick, minimum width = 8mm, minimum height = 4mm]
\tikzstyle{longblock}=[rectangle, draw=black, fill=white, rounded corners = 3pt, inner sep = 4pt, thick, minimum height = 4mm, minimum width = 10mm]
\tikzstyle{datamatrix}=[rectangle, draw=black , fill=white, rounded corners = 3pt, inner sep = 4pt, thick, minimum height = 4mm, minimum width = 10mm]
\tikzstyle{add}=[circle, draw=black, fill = white, inner sep = 1pt, thick, minimum size = 4mm]
\tikzstyle{basicline}=[->, thick, color = black]
\tikzstyle{invisiblenode}=[circle,draw=none, fill=none, minimum size = 0mm, inner sep = 0]
\pgfplotsset{
    datamarkstyle/.style ={only marks, mark size = 2pt, mark options = {line width = 0.7pt}},
    datamarkstyleB/.style ={only marks, mark size = 3pt, mark options = {line width = 0.9pt}},
}
\let\originalleft\left
\let\originalright\right
\renewcommand{\left}{\mathopen{}\mathclose\bgroup\originalleft}
\renewcommand{\right}{\aftergroup\egroup\originalright}
\newcommand{\cL}{\mathcal{L}}
\newcommand{\bbR}{\mathbb{R}}
\newcommand{\bbN}{\mathbb{N}}
\newcommand{\one}{\boldsymbol{1}} 
\newcommand{\Id}{\mathrm{I}}
\newcommand{\E}{\ensuremath{\mathbb{E}}}
\newcommand{\ex}[1]{\ensuremath{\mathbb{E}\left[ #1\right]}}
\newcommand{\pr}[1]{\ensuremath{\mathbb{P}\left[ #1\right]}}
\newcommand{\trb}[1]{\gtr\left( #1\right)}
\DeclareMathOperator{\cov}{\mathsf{Cov}}
\DeclareMathOperator{\var}{\sf Var}
\DeclareMathOperator{\diag}{\mathrm diag}
\DeclareMathOperator{\gtr}{tr}
\newcommand{\psd}{\mathbb{S}_+}
\newcommand{\eps}{\epsilon}
\newcommand{\normal}{\mathsf{N}}
\title{Linear Operator Approximate Message Passing (OpAMP)} 
\author[1]{Riccardo Rossetti}
\author[2]{Bobak Nazer}
\author[1,3]{Galen Reeves}
\affil[1]{\small Department of Statistical Science, Duke University}
\affil[2]{\small Department of Electrical and Computer Engineering, Boston University}
\affil[3]{\small Department of Electrical and Computer Engineering, Duke University}
\date{}
\begin{document}
\maketitle

\abstract{This paper introduces a framework for approximate message passing (AMP) in dynamic settings where the data at each iteration is passed through a linear operator. This framework is motivated in part by applications in large-scale, distributed computing where only a subset of the data is available at each iteration. An autoregressive memory term is used to mitigate information loss across iterations and a specialized algorithm, called projection AMP, is designed for the case where each linear operator is an orthogonal projection. Precise theoretical guarantees are provided for a class of Gaussian matrices and non-separable denoising functions. Specifically, it is shown that the iterates can be well-approximated in the high-dimensional limit by a Gaussian process whose second-order statistics are defined recursively via state evolution. These results are applied to the problem of estimating a rank-one spike corrupted by additive Gaussian noise using partial row updates, and the theory is validated by numerical simulations.}

\section{Introduction}
Approximate message passing (AMP) refers to a family of iterative algorithms that has been applied to high-dimensional inference problems including regression,  matrix estimation, and channel coding; for a comprehensive reference, see the recent tutorial~\cite{feng:2022}. The basic form of an AMP algorithm can be summarized as a recursion on $n$-dimensional iterates given by
\begin{align}
    x_t = M f_t(x_{<t}) - \sum_{s < t} b_{ts} f_s(x_{<s})  , \quad t = 0, 1, 2, \dots \label{eq:basicAMP}
\end{align} 
where $M$ is an $n \times n$ ``data matrix'',  the  $f_t: \bbR^{n \times t} \rightarrow \bbR^n$ are deterministic functions (with initialization $f_0 = f_0(\emptyset) \in \bbR^n$),  the $b_{ts} \in \bbR$ are scalar ``debiasing'' coefficients, and the  notation $x_{<t} = (x_0, \dotsc, x_{t-1})$ represents the collection of iterates up to time $t-1$. See Figure~\ref{fig:basicAMPblockdiagram} for an illustration. Depending on the application, the matrix $M$ is obtained from the observed data via elementary preprocessing steps such as centering and symmetrization. 

\begin{figure}[!ht]
\centering

\begin{tikzpicture}[scale=0.9,every node/.style={scale=0.9}]
    \node[invisiblenode] (beginning) at (0,0) {};
    \node[invisiblenode] (previousx) at (1,0) {};
    \node[block, label = Denoiser] (denoiser) at (2,0) {\large{$f_t$}};
    \node[invisiblenode] (belowdenoiser) at (2,-1.5) {};
    \node[datamatrix, label = Data Matrix] (M) at (6,0) {\large{$M$}};
    \node[add] (add1) at (10,0) {$\boldsymbol{+}$};
    \node[invisiblenode] (correctionterm) at (10,1) {};
    \node[label=above:Correction Term] at (10,1.3) {\large{$\displaystyle - \sum_{s < t} b_{ts} f_s(x_{<s})$}};
    \node[invisiblenode] (ending) at (11.5,0) {};

    \node[longblock] (delay) at (7.8,-1.5) {Delay};
    \node[block] (memory) at (4,-1.5) {Memory};
    \path [basicline] (delay) edge node[above]{\large{$x_{t-1}$}} (memory);
    \path [-, thick] (memory) edge node[above]{\large{$x_{<t}$}} (belowdenoiser);
    \path [basicline] (belowdenoiser) edge node[above]{} (denoiser);
    \path [basicline] (denoiser) edge node[above]{\large{$f_t(x_{<t})$}} (M);
    \path [basicline] (M) edge node[above]{\large{$M f_t(x_{<t})$}} (add1);
    \path [basicline] (correctionterm) edge node[above]{} (add1);
    \path [basicline] (add1) edge node[above]{\large{$x_t$}} (ending);
    \draw [->, thick] (10.9,0) -- (10.9,-1.5) -- (delay);

\end{tikzpicture}
\caption{Block diagram for the basic approximate message passing (AMP) recursion with full memory. The notation $x_{<t} = (x_0,\ldots,x_{t-1})$ compactly refers to the collection of iterates up to times $t-1$.}
\label{fig:basicAMPblockdiagram}
\end{figure}
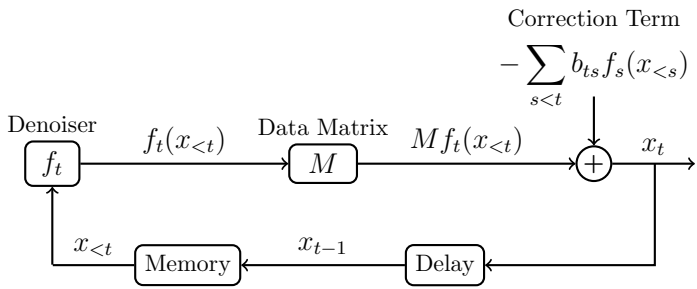

One of the key features of the AMP framework is that the behavior can be tracked precisely in high-dimension settings provided that the data matrix satisfies certain distributional assumptions. In these settings, the coefficients $b_{ts}$ can be specified in a way that both accelerates the overall convergence and guarantees that the process $\{x_t\}$ can be closely approximated by a Gaussian process $\{y_t\}$ whose mean and covariance can be efficiently computed via a recursion called state evolution (SE).  
Over the last decade, the combination of AMP-based analysis with information-theoretic arguments has provided powerful new insights into the statistical-computational tradeoffs for wide variety of problems. 

An important consideration for very large matrix operations is that the underlying computations are usually distributed across multiple servers. For instance, consider a scenario where each server is given a subset of the rows of the data matrix, and tasked to return the inner products of its rows with the current estimate. In practice, the server response times can have a long tail~\cite{Dean:2013}, and waiting for the ``stragglers'' can significantly delay the next iteration. This delay can be mitigated via replication or coded computation~\cite{Lee:2017,Dutta:2016,Yu:2017,Dutta:2020,Li:2020} to ensure that the full matrix multiplication is available once enough servers respond, at the cost of additional computation per iteration to maintain a suitable erasure-correcting code.

However, for high-dimensional inference tasks, this level of redundancy may be too conservative, and it may be more efficient to proceed to the next iterate. For example, a simple approach is to update the coordinates corresponding to responding servers, while leaving those corresponding to straggling servers unchanged. Intuitively, after applying a well-chosen denoising function, the quality of the estimate should improve, despite the missing computations. One of the main contributions of this paper is an AMP framework that naturally captures this scenario, enabling a precise performance characterization via the associated state evolution. In fact, we provide theoretical guarantees for a much broader class of problems, where the matrix multiplication at each iteration is the result of passing the data matrix through a time-varying linear operator.

From another perspective, the goal is for the iterates to converge \textit{efficiently}, e.g.,  using the fewest possible multiplications by the high-dimensional data matrix. An interesting by-product of our framework is that we can analyze the performance of AMP for carefully-scheduled computations. For instance, consider the following ``round-robin'' AMP algorithm: the rows of the data matrix are partitioned into $J$ equally-sized subsets, and the $t$-th iteration only updates the coordinates corresponding to subset $t \bmod J$. As we demonstrate below in Sections~\ref{sec:casestudy} and~\ref{sec:numerics}, for spiked matrix estimation, this update schedule can be more efficient than multiplying by the full matrix at each iteration.

\subsection{Overview of Main Results}
\paragraph*{Linear Operator AMP.}
This paper extends the scope of AMP to settings where the matrix may change with each iteration. This is modeled by  passing a fixed data matrix $M$ through a linear operator $\cL_t \colon \bbR^{n \times n} \to \bbR^{n \times n}$. We introduce Linear Operator AMP (OpAMP), which is defined by the recursion
\begin{align}
    x_t = \cL_t(M) f_t(x_{<t}) 
    - \sum_{s < t} B_{ts} f_s(x_{<s}) .  \label{eq:linAMP}
\end{align} 
Under a Gaussian assumption on the data matrix, we show how the matrices $B_{ts} \in \bbR^{n \times n}$ can be specified as a function of the $\cL_t$ to enforce approximate Gaussianity of the iterates.

\begin{figure}[!h]
\centering

\begin{tikzpicture}[scale=0.9,every node/.style={scale=0.9}]
    \node[invisiblenode] (beginning) at (0,0) {};
    \node[invisiblenode] (previousx) at (1,0) {};
    \node[block, label = Denoiser] (denoiser) at (2,0) {\large{$f_t$}};
    \node[invisiblenode] (belowdenoiser) at (2,-1.5) {};
    \node[datamatrix] (LtM) at (6,0) {\Large{$\mathcal{L}_t(M)$}};
    \node[block, label = left:Linear Operator] (linearoperator) at (6,1.35) {\large{$\mathcal{L}_t$}};
    \node[invisiblenode, label = left:Data Matrix] (M) at (6,2.5) {\large{$M$}};
    \node[add] (add1) at (10,0) {$\boldsymbol{+}$};
    \node[invisiblenode] (correctionterm) at (10,1.2) {};
    \node[label=above:Correction Term] at (10,1.5) {\large{$\displaystyle - \sum_{s < t} B_{ts} f_s(x_{<s})$}};    \node[invisiblenode] (ending) at (11.5,0) {};

    \node[longblock] (delay) at (7.8,-1.5) {Delay};
    \node[block] (memory) at (4,-1.5) {Memory};
    \path [basicline] (delay) edge node[above]{\large{$x_{t-1}$}} (memory);
    \path [-, thick] (memory) edge node[above]{\large{$x_{<t}$}} (belowdenoiser);
    \path [basicline] (belowdenoiser) edge node[above]{} (denoiser);
    \path [basicline] (denoiser) edge node[above]{\large{$f_t(x_{<t})$}} (LtM);
    \path [basicline] (LtM) edge node[above]{\large{$\mathcal{L}_t(M) f_t(x_{<t})$}} (add1);
    \path [basicline] (correctionterm) edge node[above]{} (add1);
    \path [basicline] (add1) edge node[above]{\large{$x_t$}} (ending);
    \draw [->, thick] (10.9,0) -- (10.9,-1.5) -- (delay);
    \path [basicline] (M) edge node[above]{} (linearoperator);
    \path [basicline] (linearoperator) edge node[above]{} (LtM);

\end{tikzpicture}
\caption{Block diagram for the linear operator approximate message passing (OpAMP) recursion with full memory. The notation $x_{<t} = (x_0,\ldots,x_{t-1})$ compactly refers to the collection of iterates up to times $t-1$.}
\label{fig:OpAMPblockdiagram}
\end{figure}
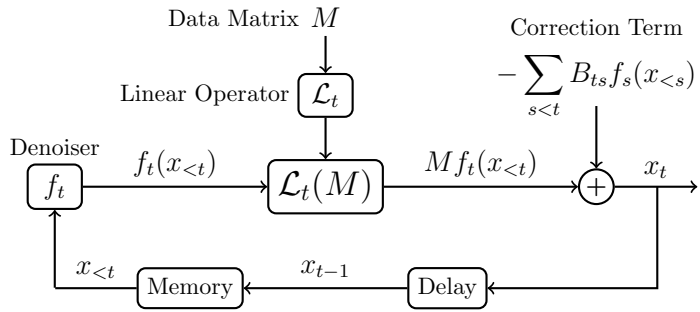

To compensate for the possibility that some of the  signal is lost at each iteration, it is essential that we allow the $f_t$ to depend on the entire history of the recursion, not just the previous update. To provide a tractable model for long-term memory we also introduce an autoregressive version of \eqref{eq:linAMP}  that has a linear dependence on the previous iterates:
\begin{align} 
    x_t = \cL_t(M) f_t(x_{t\!-\!1}) 
    +\sum_{s < t} A_{ts}  x_{s-1}
    - \sum_{s < t} B_{ts} f_s(  x_{s-1})
\end{align} 
where the matrices $A_{ts}$ can be designed such that $x_t$ provides a suitable summary of the previous iterations. 

Specializing to the case where each linear operator is given by $\cL_t(M) = \Pi_t M$ for an $n \times n$ projection matrix $\Pi_t$, we choose the matrices $A_{ts}$ to be
\begin{align}
    A_{ts} = \begin{cases}
        \Pi_t^\perp & \text{if $s = t-1$}, \\
        0 & \text{otherwise}
    \end{cases}
\end{align}
and define the projection AMP recursion 
\begin{align} \label{eq:projAMPintro}
    x_t = \Pi_t  \Big( M f_t(x_{t-1}) - \sum_{s < t} b_{ts} f_s( x_{s-1})  \Big)  + \Pi_t^{\perp}  x_{t-1} 
\end{align}
where the dependence on prior iterations is specified by the projector sequence. For the special case of commuting orthogonal projection matrices, the SE has a particularly simple recursive structure that is analysed in Section~\ref{sec:asymptotic_SE}.
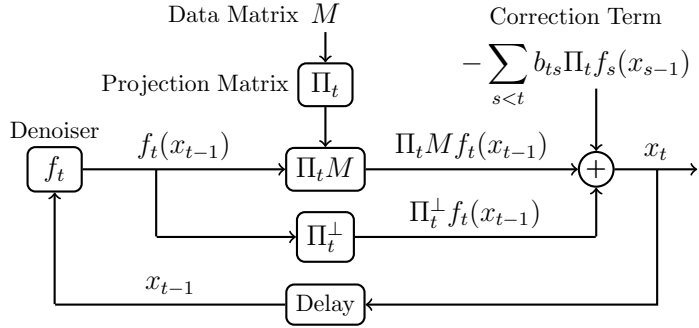
\begin{figure}
\centering

\begin{tikzpicture}[scale=0.9,every node/.style={scale=0.9}]
    \node[invisiblenode] (beginning) at (0,0) {};
    \node[invisiblenode] (previousx) at (1,0) {};
    \node[block, label = Denoiser] (denoiser) at (2,0) {\large{$f_t$}};
    \node[invisiblenode] (belowdenoiser) at (2,-2) {};
    \node[datamatrix] (PitM) at (6,0) {\large{$\Pi_t M$}};
    \node[block, label = left:Projection Matrix, inner sep = 4pt] (projection) at (6,1.25) {\large{$\Pi_t$}};
    \node[block, inner sep = 4pt] (compprojection) at (6,-1) {\large{$\Pi_t^\perp$}};
    \node[invisiblenode] (ft) at (3.5,0) {};
    \node[invisiblenode] (leftcomp) at (3.5,-1) {};
    \node[invisiblenode, label = left:Data Matrix] (M) at (6,2.3) {\large{$M$}};
    \node[add] (add1) at (10,0) {$\boldsymbol{+}$};
    \node[invisiblenode] (belowadd1) at (10,-1) {};
    \node[label=above:Correction Term] at (9.7,1.4) {\large{$\displaystyle - \sum_{s < t} b_{ts} \Pi_t f_s(x_{s-1})$}};
    \node[invisiblenode] (correctionterm) at (10,1.2) {};
    \node[invisiblenode] (ending) at (11.5,0) {};

    \node[longblock] (delay) at (6,-2) {Delay};
    \path [-, thick] (delay) edge node[above]{\large{$x_{t-1}$}} (belowdenoiser);
    \path [basicline] (belowdenoiser) edge node[above]{} (denoiser);
    \path [basicline] (denoiser) edge node[above]{\large{$f_t(x_{t-1})$}} (PitM);
    \path [basicline] (PitM) edge node[above]{\large{$\Pi_t M f_t(x_{t-1})$}} (add1);
    \path [basicline] (correctionterm) edge node[above]{} (add1);
    \path [basicline] (add1) edge node[above]{\large{$x_t$}} (ending);
    \draw [->, thick] (10.9,0) -- (10.9,-2) -- (delay);
    \path [basicline] (M) edge node[above]{} (projection);
    \path [basicline] (projection) edge node[above]{} (PitM);
    \draw [-, thick] (ft) -- (leftcomp);
    \path [basicline] (leftcomp) edge node[above]{} (compprojection);
    \path [-, thick] (compprojection) edge node[above]{\large{$\Pi_t^\perp f_t(x_{t-1})$}} (belowadd1);
    \path [basicline] (belowadd1) edge node[above]{} (add1);

\end{tikzpicture}
\caption{Block diagram for the projection approximate message passing (AMP) recursion with one-step memory.}
\label{fig:projectionAMPblockdiagram}
\end{figure}

\paragraph*{Stochastic updates via Projection AMP.} 
Consider a scenario where the rows of the data matrix $M$ are split up among multiple servers, which are tasked to return the inner products of their rows with the current estimate $f_t(x_{t-1})$. As motivated above, only a subset of the servers respond by the deadline. Alternatively, consider a scenario where only a subset of the rows are scheduled for each iteration, to reduce the computational load. Both of these scenarios can be modeled using \eqref{eq:projAMPintro} where each $\Pi_t$ is a diagonal $0$-$1$ matrix indicating which subset of the rows are  updated at each iteration and the orthogonal complement $\Pi_t^\perp = \Id - \Pi_t$ is used to retain signal memory via the term $\Pi_t^{\perp} x_{t-1}$.

Our formulation and results enables us to precisely characterize the effective signal-to-noise ratio at each iteration. As discussed below in Section~\ref{sec:relatedwork}, various special cases of our framework have been previously studied in the literature, but, to the best of the authors' understanding, the general framework for linear operators has not been considered directly. Based on the results of our simulations (Section~\ref{sec:numerics}), we suggest that our projection method is potentially useful in accelerating convergence of standard AMP recursions, even in non-distributed settings, under certain models for the computational cost per iteration.

\paragraph*{Asymptotic Gaussianity.} Beyond the algorithmic developments, a further contribution of this paper is the framework we use to describe the Gaussian approximation to the AMP iterates. Following the seminal work in \cite{Donoho:2009,Bayati:2011,javanmard:2013}, most of the literature on AMP restricts to settings where the functions $f_t$ are separable (i.e., the $i$-th output depends only on the $i$-th input) and then provides  convergence guarantees with respect to the  empirical measure of the iterates over a finite number of iterations; see \cite{feng:2022} for more details. 

In this paper, we  build on the general framework for non-separable functions \cite{berthier:2020,gerbelot:2023,Montanari:2022}, where (pseudo)-Lipschitz continuity is the only assumption placed on the function sequence and convergence is assessed in terms a of sequence of suitably normalized pseudo-Lipschitz test functions. This mode of convergence does not imply convergence in distribution for the underlying iterates because the space on which these test functions are applied is scaling with the problem size. Theorem~\ref{thm:XnYnGausslim} in the Appendix provides a dual interpretation for this mode of convergence, that is based on the existence of a coupling between the AMP iterates $\{x_t\}$ and a Gaussian approximation $\{y_t\}$. Our main theorems establish conditions under which the normalized difference over a fixed number of iterates $T$ convergence to zero in probability: 
\begin{align}
    \frac{\|x_{\scriptscriptstyle{\le T}} - y_{\scriptscriptstyle{\le T}} \|}{\sqrt{n}} \xrightarrow[n \to \infty]{\mathrm{p}} 0
\end{align}
where $x_{\le T} $ denotes the concatenation of $x_0, \dots x_T$ and $\|\cdot\|$ is the Euclidean norm. To the best of our understanding, this type of coupling argument has not appeared previously in the AMP literature. 
 
\subsection{Case Study: Power Iteration with Partial Updates} \label{sec:casestudy} 
As an illustrative example of application of our results, we consider the problem of constructing an estimate for a random point $\theta \sim \mathsf{Unif}(\{\pm 1\}^n\})$ given matrix observation $M$ from a spiked Wigner model
\begin{align} \label{eq:spikedMatrixUnitSignal}
    M = \frac{\lambda}{n} \theta\theta^\top + Z
\end{align}
where $\lambda > 0$ and $Z$ is an $n \times n$ random matrix drawn independently of $\theta$ from the Gaussian orthogonal ensemble $\mathsf{GOE}(n)$, i.e., $Z$ is symmetric with independent $\normal(0, 1/n)$ entries above the diagonal and independent $\normal( 0, 2/n)$ entries on the diagonal. 
A natural approach is to construct an estimator of $\theta$ based on properties of the spectrum of  $M \in \bbR^{n \times n}$,
\begin{align}
    M = \sum_{i=1}^n \xi_i v_i v_i^\top,
\end{align}
for eigenvalues $|\xi_1| \ge |\xi_2| \ge \dots \ge |\xi_n| \in \bbR$ sorted in order of magnitude. From standard results in random matrix theory \cite{Benaych:2011}, the value of $\lambda$ determines whether the angle $|\langle \theta, v_1\rangle|$ is non-vanishing in the large-$n$ limit. In particular,
\begin{align}
    |\langle \theta, v_1 \rangle|^2 \xrightarrow[n \rightarrow \infty]{\mathrm{a.s.}} \begin{cases}
        1 - \frac{1}{\lambda^2} & \text{if $\lambda > 1$} \\ 0 & \text{otherwise}
    \end{cases} \qquad \xi_1 \xrightarrow[n \rightarrow \infty]{\mathrm{a.s.}} \begin{cases}
        \lambda + \frac{1}{\lambda} & \text{if $\lambda > 1$} \\ 2 & \text{otherwise}
    \end{cases}
\end{align}
Conversely, when $\lambda < 1$, it is known that no procedure is capable of providing an estimate of $\theta$ based on $M$ that achieves non-vanishing correlation with the ground truth \cite{perry:2018}. Thus, a reasonable approach to estimate $\theta$ is to use the leading eigenvector of $M$, $v_1$, as our estimator.

A classical technique to recover $v_1$ is the power method, or power iteration~\cite{golub:2013aa}. For some initial value $x_0 \in \bbR^n$, the power iteration is given by the recursion
\begin{align}
   \hat{\theta}_t = \frac{\sqrt{n}}{\|x_t\|} x_t, \qquad  x_{t+1} = M \hat{\theta}_t   \label{eq:basicpower}
\end{align}
Let $\alpha \coloneqq \cos^{-1}(|\langle \frac{\hat{\theta}_0}{\sqrt{n}}, v_1 \rangle| )$ be the angle between the initialization $x_0$ and the leading eigenvalue $v_1$. Classical convergence bounds~\cite[Theorem 8.2.1]{golub:2013aa} guarantee that 
\begin{align} \label{eq:PowerIter}
    1 -  |\langle v_1, \hat{\theta}_t \rangle |^2 \leq \tan^2(\alpha) \Big| \frac{\xi_2}{\xi_1} \Big|^{2t} \ . 
\end{align}
Hence, under the mild condition $\alpha > 0$ (i.e., $x_0$ does not lie in the nullspace of $v_1$), the estimates $\hat{\theta}_t$ converge to $v_1$ geometrically fast as long as the spectral gap is greater than one. This result is general and holds for any symmetric matrix $M$.

Interpreting the projection onto the $\sqrt{n}$-sphere $x \mapsto \sqrt{n} x / \|x\|$ to be a denoising function, a standard AMP correction term can be added to the power method to obtain the ``AMP-adjusted" recursion
\begin{align} \label{eq:ampPowerIter}
    \hat\theta_{t} = \frac{\sqrt{n}}{\|x_t\|} x_t , \qquad x_{t+1} = M \hat\theta_{t} - \frac{1}{\sqrt{n}\|x_{t}\|} \hat{\theta}_{t-1}
\end{align}
with the convention that $\hat\theta_{-1} \equiv 0$. When $M$ is sampled from a spiked matrix model, this iteration can be tracked accurately in the large-$n$ limit via SE. Furthermore, when $\lambda > 1$, the iterates $\{x_t\}$ can be shown to converge to the fixed point $\sqrt{n} \lambda v_1$, a multiple of the leading eigenvector. 

For applications with very large data matrices, it may only be possible to operate on a subset of the full data matrix $M$ at each iteration. Specifically, consider the setting where only a fraction of the rows of $M$ are available at each iteration $t$, i.e. only $\Pi_t M$ is observed where $\Pi_t = \diag(\delta_t)$ and $\delta_t \in \{ 0,1\}^n$ is a binary vector indicating which rows are updated in the $t$-th iteration.

for some diagonal 0--1 matrices $\{\Pi_t\}$. As discussed above, this may be due to straggler nodes in a distributed setting or deliberate scheduling. Based on the results in this paper, we propose the following AMP-corrected power method with partial updates:
\begin{align} \label{eq:opampPowerIter}
    \hat\theta_t = \frac{\sqrt{n}}{\|x_t\|}, \quad x_{t+1} = \Pi_t\left( M\hat\theta_t - \frac{1}{\sqrt{n}\|x_t\|} \sum_{s<t} \trb{\Pi_{t-1}^\perp\dotsm \Pi_{s+1}^\perp\Pi_s} \hat\theta_{s} \right) + \Pi_t^\perp x_t
\end{align} where the complementary projections are simply $\Pi_t^\perp = \Id - \Pi_t$. This recursion is practical as it does not require knowledge of any of the model parameters and its computational cost per iteration $t$ is dominated by the matrix multiply $\Pi_tMx_t$, which involves $O(nk_t)$ operations, where $k_t = \operatorname{rank}(\Pi_t)$ (or the number of nonzero diagonal elements).

Despite the apparent complexity, the trace terms $\trb{\Pi_{t-1}^\perp\dotsm \Pi_{s+1}^\perp\Pi_s}$ can be computed efficiently in a recursive fashion in $O(nt)$ operations for time step $t$. 
This can be achieved via an equivalent recursion that uses the vectors $\gamma_s^{(t)} \in \{0,1\}^n, 0\leq s \leq t$ to indicate the coordinates that, as of time $t$, were most recently updated at time $s$. Specifically, the $t$-th iterate is formed by
\label{eq:opampEquiv}
\begin{align}
    &\gamma_{tt} = \Pi_t \mathsf{1} \nonumber \\
    &\gamma_{ts} = \Pi_t^\perp \gamma_{t-1,s},\  0 \leq s < t \nonumber \\
    &\hat\theta_t = \frac{\sqrt{n}}{\|x_t\|}x_t, \quad x_{t+1} = \Pi_t \left( M \hat\theta_t  - \frac{1}{\sqrt{n}\|x_t\|} \sum_{s<t} \| \gamma_{t-1,s}\|  \hat\theta_s \right)   + \Pi_t^\perp x_t 
\end{align} 
where $\mathsf{1}$ denotes the all-ones vector. Alternatively, see the implementation in Algorithm~\ref{alg:opamp}.

In Section~\ref{sec:powerIterSE}, we provide single-letter formulas for the SE dynamics of \eqref{eq:opampPowerIter} under some structural assumptions on the signal $\theta$ and update schedule $\{\Pi_t\}$. Note that the AMP correction term is non-standard due to its long-term memory. Intuitively, this is due to coordinates that have not been updated for more than one time step. Formally, it follows from the correction term in our OpAMP framework.

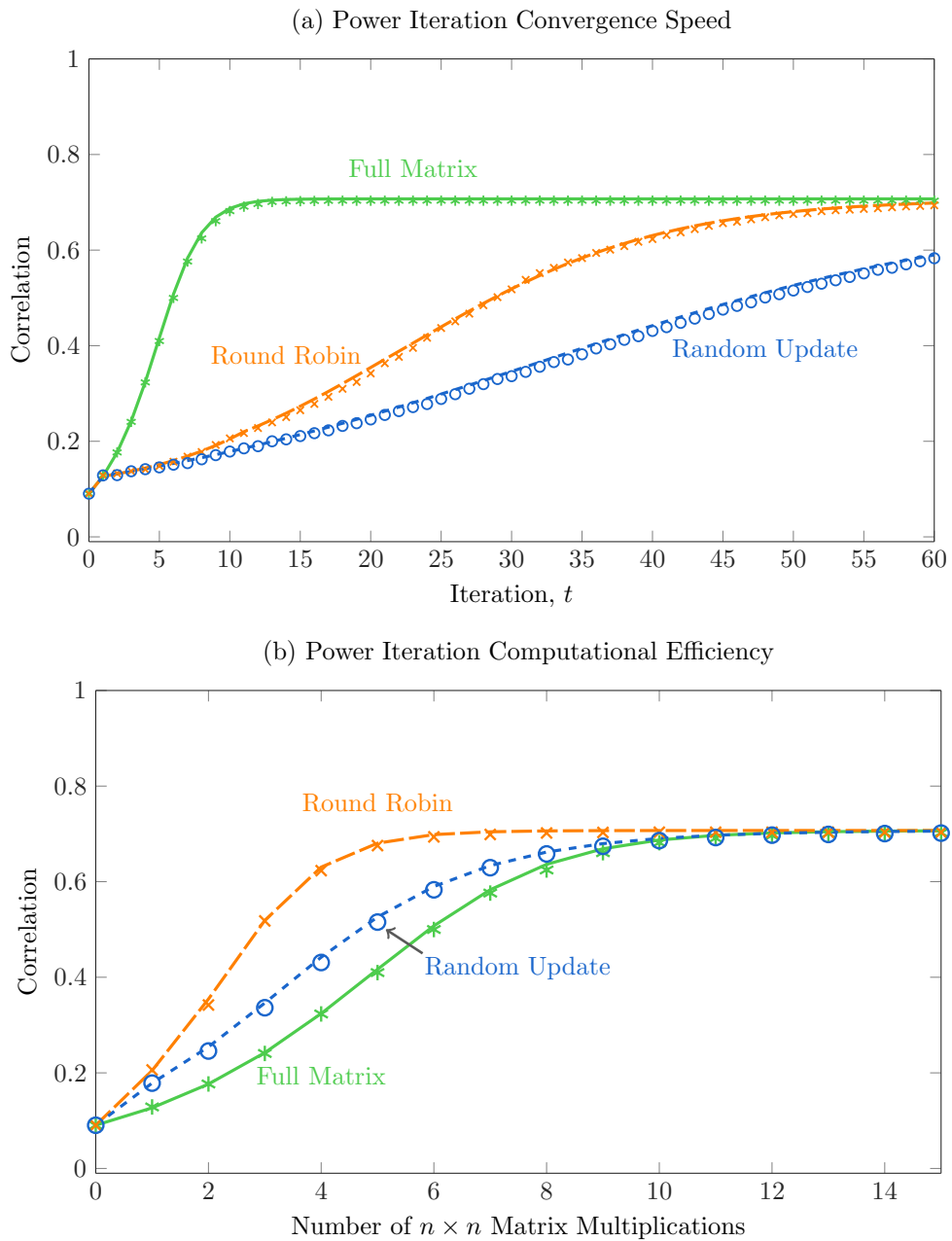
\begin{figure}
\begin{center}
\subfigure{
\label{fig:pi_intro_iter_plot}
\def\datafilename{"fig_data/banff-gaus.csv"} 

\begin{tikzpicture}
\begin{axis}[%
width=0.7\textwidth,
height=0.4\textwidth,
scale only axis,
separate axis lines,
every outer x axis line/.append style={white!15!black},
every x tick label/.append style={font=\color{white!15!black}},
xmin=1,
xmax=100,
xlabel={Iteration, $t$},
every outer y axis line/.append style={white!15!black},
every y tick label/.append style={font=\color{white!15!black}},
ymin=-0.01,
ymax=1,
ylabel={Correlation},
ylabel near ticks,
title = {(a) Power Iteration Convergence Speed},
legend style={at={(0.98,0.96)},anchor=south east,draw=white!15!black,fill=white,legend cell align=left}
]

\addplot [color=plot-green,solid, line width=1.4pt] table[col sep=comma, x="iter", y="se_fullmat"] {\datafilename};

\addplot [datamarkstyle, color = plot-green, mark = asterisk]
            table [col sep = comma, x="iter",y="amp_fullmat"] {\datafilename} node[pos=0.15,above = .1cm ] {Full Matrix};

\addplot [color=orange,dash pattern=on 8pt off 2pt, line width=1.4pt] table[col sep=comma, x="iter", y="se_robin"] {\datafilename};

\addplot [datamarkstyle, color = orange, mark = x]
            table [col sep = comma, x="iter",y="opamp_robin"] {\datafilename} node[pos=0.15, above = .9cm ] {Round Robin};

\addplot [color=plot-darkblue,dashed,line width=1.4pt] table[col sep=comma, x="iter", y="se_rand"] {\datafilename}node[pos=0.36, below = .55cm ] {Random Update};

\addplot [datamarkstyle, color = plot-darkblue, mark = +]
            table [col sep = comma, x="iter",y="opamp_rand", y error="amp_rand_sd"] {\datafilename};
\end{axis}
\end{tikzpicture}
\subfigure{
\label{fig:pi_intro_comp_plot}
\def\datafilename{"fig_data/banff-norm-no-downsample.csv"} 

\begin{tikzpicture}
\begin{axis}[%
width=0.7\textwidth,
height=0.4\textwidth,
scale only axis,
separate axis lines,
every outer x axis line/.append style={white!15!black},
every x tick label/.append style={font=\color{white!15!black}},
xmin=1,
xmax=12,
xlabel={Number of $n \times n$ Matrix Multiplications},
every outer y axis line/.append style={white!15!black},
every y tick label/.append style={font=\color{white!15!black}},
ymin=-0.01,
ymax=1,
ylabel={Correlation},
ylabel near ticks,
title = {(b) Power Iteration Computational Efficiency},
legend style={at={(0.98,0.96)},anchor=south east,draw=white!15!black,fill=white,legend cell align=left}
]

\addplot [color=orange,dash pattern=on 8pt off 2pt, line width=1.4pt] table[col sep=comma, x="iter_norm", y="se_robin"] {\datafilename};

\addplot [datamarkstyleB, color = orange, mark = x]
            table [col sep = comma, x="iter_norm",y="opamp_robin"] {\datafilename} node[pos=0.4, above = .1cm ] {Round Robin};

\addplot [color=plot-darkblue,dashed,line width=1.4pt] table[col sep=comma, x="iter_norm", y="se_rand"] {\datafilename}node[pos=0.32, above = -0.05cm, rotate=28] {Random Update};


\addplot [datamarkstyleB, color = plot-darkblue, mark = +]
            table [col sep = comma, x="iter_norm",y="opamp_rand", y error="amp_rand_sd"] {\datafilename};

\addplot [color=plot-green,solid, line width=1.4pt] table[col sep=comma, x="iter", y="se_full"] {\datafilename};

\addplot [datamarkstyleB, color = plot-green, mark = asterisk]
            table [col sep = comma, x="iter",y="amp_full"] {\datafilename} node[pos=0.032,below = .1cm, rotate=28 ] {Full Matrix};
            
\end{axis}
\end{tikzpicture}
\end{center}
\caption{Correlation $\frac{1}{n} \langle \theta, \hat{\theta}_t \rangle
$ attained after $t$ iterations for the full-matrix, round-robin, and random-update protocols. The denoising function is simply projection onto the $\sqrt{n}$-sphere $f_t(x) = \sqrt{n}/\|x\|\, x$, meaning that, modulo the correction term, these protocols are variations on power iteration. The curves represent the theoretical predictions from the state evolution from Theorem~\ref{thm:projAMP} and the marks represent the empirical performance averaged over $100$ trials, starting with the same initialization. All three protocols converge to the same fixed point. On the top, the correlation is plotted with respect to the number of iterations, for which the full-matrix updates are intuitively faster. On the bottom, the same data is plotted with respect to the number of effective multiplications by an $n \times n$ matrix, which serves as a proxy for the computational complexity. Under this metric, the round-robin protocol is the most efficient. }
\label{fig:pi_intro_plots}
\end{figure}

\begin{algorithm} 
\caption{Power Iteration with Partial Updates} \label{alg:opamp}
\begin{algorithmic}[1]
    \State{\textbf{Inputs: }$M \in \bbR^{n \times n},\, x_0 \in \bbR^n, \, \delta_t \in \{0,1\}^n$}
    \State $r_0 \gets \|x_0\|$ 
    \State $w_0 \gets \sum_{i=1}^n \delta_{0i}$ 
    \For{$i=1,2,\dots,n$}
        \If{$\delta_{0i} = 1$}
            \State $x_{1i} = \frac{1}{r_0} \sum_{j=1}^n M_{ij} x_{0j}$
        \Else
            \State $x_{1i} = x_{0i}$
        \EndIf
    \EndFor
    
    \For{$t=1,2,\dots,T-1$} 
        \State $r_t \gets \|x_t\|$
        \State $w_t \gets \sum_{i=1}^n \delta_{ti}$
        \For{$i=1,2,\dots,n$}
        \If{$\delta_{ti} = 1$}
            \State $x_{t+1,i} \gets \frac{1}{r_t} \left( \sum_{j=1}^n M_{ij} x_{ti} - \frac{1}{n} \sum_{s=0}^{t-1} \frac{w_s}{r_s} x_{si} \right)$
            \For{$s=0,\dots,t-1$}
                \State $w_s \gets w_s - \delta_{ti}\delta_{si}$
                \State $\delta_{si} \gets 0$
            \EndFor
        \Else
            \State $x_{t+1,i} \gets x_{ti}$
        \EndIf
        \EndFor
    \EndFor
    \State \textbf{return} $(x_1, \dots, x_T)$
\end{algorithmic}
\end{algorithm}

\paragraph*{Numerical Results.} In Figure~\ref{fig:pi_intro_plots}, the projection AMP algorithm in \eqref{eq:opampPowerIter} is applied to a size $n = 5,000$ data matrix with $\lambda = \sqrt{2}$, and the results are averaged over $100$ Monte Carlo trials, each with the same ground truth $\theta \in \{ \pm 1 \}^n$ and initialization $x_0 \in \bbR^n$ with $\langle \frac{\theta}{\sqrt{n}} , \frac{x_0}{\|x_0\|} \rangle = 0.01$. A comparison of the empirical results and the theoretical prediction obtained from the asymptotic SE is provided for the following protocols: 
\begin{itemize}
    \item \emph{Full-matrix update:} The full data matrix is applied at each iteration.
    \item \emph{Round-robin update:} The rows are partitioned into $10$ equally-sized subsets. Each iteration applies the rows in a subset, and we cycle through the subsets in a fixed order over $10$ iterations.
    \item \emph{Random update:} At each iteration, each row of the data matrix is applied independently with probability $1/10$.
\end{itemize} A formal description of the update protocols is presented in Section~\ref{sec:numerics}. Intuitively, the round-robin and random-update protocols converge more slowly than full-matrix AMP with respect to the raw iteration count. However, if we instead plot the performance with respect to effective number of $n \times n$ matrix multiplications, we find that the round-robin protocol is more efficient than the full-matrix protocol. (For certain parameter settings, the random-update protocols is also more efficient, but this is not always the case.)

\subsection{Related Work} \label{sec:relatedwork}

Most of the work in the AMP literature has focused on finite-memory versions of \eqref{eq:basicAMP} where each $f_t$ depends only on a fixed number of previous iterates\cite{Donoho:2009,Bayati:2011,rangan:2011,javanmard:2013,rush:2017aa,lesieur:2017aa, pandit:2019aa,Montanari:2021a,behne2022fundamental}. The full-memory formulation  in \eqref{eq:basicAMP} has appeared in recent work as a model for generalized first-order methods~\cite{Celentano:2020,Montanari:2022a}. Our analysis builds on the theoretical framework for non-separable functions and IID Gaussian matrices introduced by Berthier~et al. \cite{berthier:2020}, and further developed in \cite{gerbelot:2023,Montanari:2022}.

A version of the projection AMP framework of this paper was studied by Çakmak et al.~\cite{Cakmak:2022} using non-rigorous dynamical functional theory. Projection AMP is also related to recent work that uses a full memory AMP recursion to approximate the discrete-time dynamics of gradient descent and other first-order optimizations techniques~\cite{Celentano:2021, Gerbelot:2023a}. These works focus on the behavior of existing optimization techniques in scaling regimes where the number of rows updated at each iteration grows sublinearly with the problem dimension. By contrast, the main focus of this paper is to design algorithms that overcome the limitations of the dynamic data as expressed in \eqref{eq:linAMP}, e.g., by optimizing the long-term memory as function of the linear operators. 

In a slightly different direction, the special case of \eqref{eq:linAMP} where the linear operator is fixed for all iterations has been studied by a subset of the authors in the context of the matrix tensor product model \cite{reeves2020information, rossetti2023approximate}.

Beyond the setting of IID Gaussian matrices, AMP algorithms have also been proposed and analyzed for orthogonally-invariant random matrix ensembles \cite{rangan:2019,Opper:2016,ma:2017ab, 
Takeuchi:2021,Liu:2022,Fan:2022,Zhong:2021,Barbier:2023} and semirandom ensembles \cite{Dudeja:2023}. These results impose a separability assumption on the functions $\{f_t\}$, which precludes the general linear transformations used in the proof of our OpAMP framework. Extending the results in this  paper to other matrix ensembles is an interesting direction for future research. 

We also note that there has been  progress on finite sample analysis of AMP algorithms via concentration bounds \cite{Rush:2018,Cademartori:2023}, local convexity properties \cite{Celentano:2023}, and distributional decompositions \cite{Lu:2021, Li:2023}. However, all of these results are restricted to the setting of separable functions.  
Recent efforts have developed distributed, accelerated, and robust variations on the power method as well as long-run convergence guarantees under suitable conditions~\cite{Hardt:2014,Lei:2016,Xu:2018, Raja:2022, Li:2021, Xu:2022}. By choosing $f_t$ in~\eqref{eq:projAMPintro} to project onto the unit sphere, our framework includes, as a special case, a distributed power method with partial updates as well as the associated per-iteration guarantees from the state evolution. From one perspective, our correction term acts like an acceleration/momentum term.  Additionally, recent work on subspace tracking algorithms with missing data~\cite{Balzano:2018} has characterized the high-dimensional performance limit via differential equations~\cite{Wang:2018}, and it would be interesting to explore possible connections to AMP.

\section{Linear Operator AMP}
This section describes the linear operator AMP framework and states our main theoretical results. To keep the presentation streamlined, we focus on the centered version of the recursion given by
\begin{align}
    x_t = \cL_t( Z)  f_t(x_{<t}) - \sum_{s < t} B_{ts} f_s(x_{<s}) \label{eq:opAMP}
\end{align}
where $Z$ is an $n \times n$ random matrix drawn from the Gaussian orthogonal ensemble $\mathsf{GOE}(n)$, i.e., $Z$ is symmetric with independent $\normal(0, 1/n)$ entries above the diagonal and independent $\normal( 0, 2/n)$ entries on the diagonal. The extension to settings where the matrix has a low-rank signal component follows from standard arguments in the AMP literature.

These arguments are outlined in Appendix~\ref{sec:AMPcentering} for the spiked matrix model considered in Section~\ref{sec:stochastic_updates} and full details for other problem settings (e.g., regression) can be found in~\cite{feng:2022}.

Each linear operator $\cL_t$ has a (non-unique) decomposition of the form
\begin{align}
\cL_t(Z) = \sum_{k=1}^K L_{tk} Z R_{tk} \label{eq:cLt}
\end{align}
for $n \times n$ matrices $\{ L_{tk}, R_{tk} : k = 1, \dots, K\}$. We require that the both operator norm and the rank (i.e., the smallest $K$ such that \eqref{eq:cLt} holds) are bounded uniformly w.r.t.\ the problem dimension $n$. 

Our theoretical results  provide a connection between the distribution of the AMP iterates and a zero-mean Gaussian process $\{y_t\}$ whose second-order statistics are described in terms of $\{\cL_t\}$ and $\{f_t\}$ via a recursive process known as \emph{state evolution}. Starting with $\cov(y_0) = \frac{1}{n} \|f_0\|^2 \Id $, the covariance at time $t$ is defined by the covariance up to $t -1$ according to
\begin{align}
\label{eq:SEopAMP}
\cov(y_s, y_t)  
& = \sum_{l,k=1}^K  q_{sltk}  L_{sl} L_{tk}^\top, \nonumber \\
   q_{sltk} & \coloneqq \frac{1}{n} \ex{\langle R_{sl} f_{s}(y_{<s} ), R_{tk} f_{t}(y_{<t}) \rangle } 
\end{align}
This construction holds for any collections of matrices  $\{ L_{tk}, R_{tk} \}$ satisfying the decomposition in \eqref{eq:cLt}.

\begin{assumption}\label{ass:ft}
Each $f_t \colon \bbR^{n \times t} \to \bbR^n$ is $L$-Lipschitz continuous and satisfies $\frac{1}{\sqrt{n}}\|f_t(0)\| \le C$ where $C,L$ are positive constants that do not depend on $n$.  
\end{assumption}

\begin{assumption}\label{ass:Lt}
Each $\cL_t$ can be decomposed in the form given in \eqref{eq:cLt}  with $\|R_{tk}\|_{op}, \|L_{tk}\|_{op} \le C'$ for all $t \in \bbN_0, k=1,\dots,K$,  where $C'$ is a positive constant that does not depend on $n$.
\end{assumption}

\begin{theorem}\label{thm:opAMP}
Let $\{x_t\}$ be generated by \eqref{eq:opAMP} and let $\{y_t\}$ be the zero-mean Gaussian process defined by the SE \eqref{eq:SEopAMP}. Suppose Assumptions~\ref{ass:ft} and \ref{ass:Lt} hold,  $Z \sim \mathsf{GOE}(n)$, and 
the matrices $\{B_{ts} : 0 \le s < t \}$ are given by
\begin{align}
B_{ts}  
   & =  \sum_{k,l=1}^K \frac{1}{n} \gtr\left( R_{tk} \ex{\mathsf{D}_{s} f_t\left( y_{<t}  \right)} L_{sl}   \right)\, L_{tk}   R_{sl} \label{eq:BtsopAMP}
\end{align}
Here, the notation $\mathsf{D}_s$ indicates the Jacobian matrix of $f_t(x_{<t})$ computed w.r.t. the input vector $x_s$. Then, for any fixed number of iterations $T$, there exists  a sequence (in $n$) of couplings between $x_{\scriptscriptstyle{\le T}}$ and $y_{\scriptscriptstyle{\le T}}$ such that
\begin{align}
    \frac{\|x_{\scriptscriptstyle{\le T}} - y_{\scriptscriptstyle{\le T}} \|}{\sqrt{n}} \xrightarrow[n \to \infty]{\mathrm{p}} 0 
\end{align}
\end{theorem}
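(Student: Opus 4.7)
The plan is to proceed by induction on the iteration index $T$, combining Bolthausen's conditioning technique, adapted to the linear-operator structure $\cL_t(Z) = \sum_k L_{tk} Z R_{tk}$, with an explicit coupling construction of the type used in the non-separable AMP literature~\cite{berthier:2020,gerbelot:2023,Montanari:2022}. The induction hypothesis at time $T$ asserts the existence of a coupling of $(x_{<T}, y_{<T})$ on a common probability space with $\|x_{<T} - y_{<T}\|/\sqrt{n} \xrightarrow[n \to \infty]{\mathrm p} 0$; by the Lipschitz property of the $f_t$ (Assumption~\ref{ass:ft}) and the uniform bounds on the $R_{tk}, L_{tk}$ (Assumption~\ref{ass:Lt}), this coupling automatically propagates to concentration of each empirical inner product $\tfrac{1}{n}\langle R_{sl} f_s(x_{<s}), R_{tk} f_t(x_{<t})\rangle$ around its SE value $q_{sltk}$ from~\eqref{eq:qsltk}.

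\textbf{Conditioning and the Onsager identity.} The iterate $x_{<T}$ depends on $Z$ only through the finite collection of linear observations $\{Z R_{tk} f_t(x_{<t}) : t < T,\ 1 \le k \le K\}$, together with their transposes by symmetry of $Z$. Letting $\cF_T$ denote the $\sigma$-algebra they generate, I would decompose $Z = \E[Z \mid \cF_T] + \tilde Z_T$, where $\tilde Z_T$ is an independent GOE-type residual supported on the orthogonal complement of the observed directions. Applied to $F_T \coloneqq f_T(x_{<T})$ this yields
\[
\cL_T(Z) F_T \;=\; \cL_T\!\bigl(\E[Z \mid \cF_T]\bigr) F_T + \cL_T(\tilde Z_T) F_T .
\]
The first term is a linear combination of the past directions $R_{sl} f_s(x_{<s})$ with coefficients given by Gaussian regression. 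Expanding these and applying Stein's identity to the linear-in-$Z$ functionals embedded inside $f_t(x_{<t})$, the coefficients reduce in the large-$n$ limit to $\tfrac{1}{n} \gtr\bigl(R_{tk} \E[\mathsf{D}_s f_t] L_{sl}\bigr)$. Summing over $k,l$ recovers exactly the matrix $B_{Ts}$ of~\eqref{eq:BtsopAMP}, so the correction term in~\eqref{eq:opAMP} cancels the conditional mean up to an $o_p(\sqrt n)$ Euclidean error.

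\textbf{Fluctuation term and construction of the coupling.} The residual $\cL_T(\tilde Z_T) F_T$ is, conditional on $\cF_T$, centered Gaussian with covariance $\sum_{k,l=1}^K \tfrac{1}{n}\langle R_{Tk} F_T, R_{Tl} F_T\rangle\,L_{Tk} L_{Tl}^\top$ up to an $O(T)$-rank correction from projecting off the already-observed subspace. By the inductive concentration hypothesis this data-dependent covariance equals $\cov(y_T, y_T)$ from~\eqref{eq:SEopAMP} up to $o_p(1)$, and the cross-covariances with past fluctuations come out correctly because $\tilde Z_T$ retains the appropriate correlations against the fixed observed directions. I would construct $y_T$ by taking $\cL_T(\tilde Z_T) F_T$ itself, adjusting by an independent small-norm Gaussian to absorb the covariance mismatch, and restoring the rank-$O(T)$ projection inside the observed subspace. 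The total error $x_T - y_T$ then decomposes into the Onsager residual, the covariance-matching slack, and the low-rank projection remainder; each is $o_p(\sqrt n)$ in Euclidean norm, closing the induction.

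\textbf{Main obstacle.} The delicate step is handling non-separable Lipschitz $f_t$ rigorously. Stein's identity requires differentiability, so $f_t$ must be mollified, the identity applied, and limits taken while uniformly tracking Lipschitz constants and the operator norm of $\E[\mathsf{D}_s f_t]$ inside the trace of~\eqref{eq:BtsopAMP}; this is where the pseudo-Lipschitz machinery of \cite{berthier:2020,gerbelot:2023} becomes essential. In parallel, the rank-$O(KT)$ conditioning introduces low-rank corrections that must be shown to contribute only $o_p(\sqrt n)$ to $x_T - y_T$, which relies on the uniform operator-norm bounds in Assumption~\ref{ass:Lt} together with the finiteness of the horizon $T$. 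Once the per-step coupling is established, concatenation over $t = 0,1,\dots,T$ yields the stated joint convergence $\|x_{\le T} - y_{\le T}\|/\sqrt n \xrightarrow[n \to \infty]{\mathrm p} 0$.
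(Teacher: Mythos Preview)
Your outline is plausible and, if carried through carefully, would likely succeed; but it takes a genuinely different route from the paper's proof. You propose to redo the Bolthausen conditioning argument from scratch, adapted to the operator structure $\cL_t(Z) = \sum_k L_{tk} Z R_{tk}$: condition on the observed linear functionals of $Z$, cancel the conditional mean against the Onsager term via Stein's identity, and couple the fluctuation to the Gaussian process by covariance matching. This is the standard AMP proof template executed at the OpAMP level.

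The paper instead avoids repeating any of this machinery by a \emph{lifting} reduction. It introduces an auxiliary doubly-indexed recursion $\{w_{tk}\}$, $k=1,\dots,K$, defined by
\[
w_{tk} = Z\, g_{tk}(w_{<t}) - \sum_{s<t}\sum_{l=1}^K c_{tksl}\, g_{sl}(w_{<s}), \qquad g_{tk}(w_{<t}) \coloneqq R_{tk} f_t\Big(\textstyle\sum_l L_{0l} w_{0l},\dots,\sum_l L_{t-1,l} w_{t-1,l}\Big),
\]
so that the right matrices $R_{tk}$ are absorbed into the denoisers and the left matrices $L_{tk}$ are applied only as a linear post-processing via $x_t = \sum_k L_{tk} w_{tk}$. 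The lifted recursion is then nothing more than a standard full-memory AMP with the bare matrix $Z$ and Lipschitz functions $g_{tk}$, for which the paper invokes its Theorem on full-memory AMP (proved by embedding into the matrix-valued iteration of Gerbelot--Berthier~\cite{gerbelot:2023}). The SE covariance~\eqref{eq:SEopAMP} and the debiasing formula~\eqref{eq:BtsopAMP} then drop out by pushing the linear map $w_{<t} \mapsto y_{<t}$ through the chain rule, with no new conditioning calculation needed.

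The trade-off is clear. Your approach is self-contained and would give a direct understanding of how the Onsager term~\eqref{eq:BtsopAMP} arises from the conditional mean of $\cL_T(Z) F_T$; but it forces you to re-handle all the delicate points you flag (mollification for Stein, low-rank projection errors of rank $O(KT)$, covariance matching with cross-terms), each adapted to the operator structure. The paper's lifting buys modularity: all of those issues are resolved once, inside the black-box reference, for the plain $Z$ case, and the operator structure enters only through bounded linear pre- and post-composition, for which Assumptions~\ref{ass:ft} and~\ref{ass:Lt} trivially guarantee that the lifted $g_{tk}$ remain uniformly Lipschitz. If you are writing this up, the lifting argument is both shorter and less error-prone.
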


\begin{remark}\label{remark:bts_hat}
The debiasing matrices  $B_{ts}$ given in Theorem~\ref{thm:opAMP} are deterministic matrices specified by expectations with respect to the Gaussian process $\{y_t\}$.  For practical applications, it can be simpler to replace these matrices with approximations $\hat{B}_{ts}$ that are measurable with respect to the previous iterates. For example, one may use the empirical estimate 
\begin{align}
    \hat{B}_{ts} =   \sum_{k,l=1}^K \frac{1}{n} \gtr\left( R_{tk} \mathsf{D}_{s} f_t\left( x_{<t}  \right) L_{sl}   \right)\, L_{tk}   R_{sl} 
\end{align}
where the expectation with respect to $y_{<t}$ is replaced by the sample path of $x_{<t}$. So long as these approximations are consistent in the sense that $\|\hat{B}_{ts} - B_{ts}\|_{op}$ converges to zero in probability as $n \to \infty$,  it can be shown that the Gaussian approximation in Theorem~\ref{thm:opAMP} also holds for the recursion defined with $\hat{B}_{ts}$. The proof uses the same recursive argument as the proof of \cite[Corollary~2]{berthier:2020} and is not repeated here. 
\end{remark}

\subsection{Autoregressive Linear Operator AMP}

While the general formulation in \eqref{eq:opAMP} allows for arbitrary dependence on prior iterations, the question remains of how the  $f_t$ should be optimized as a function of the linear operators. Motivated by practical considerations, we introduce an autoregressive version of \eqref{eq:opAMP}, that uses  a weighted linear combination of previous updates:
\begin{align}
    x_t = \cL_t( Z)  f_t(x_{t-1}) + \sum_{s <t } A_{ts} x_s  - \sum_{s < t} B_{ts} f_s(x_{t-1}) \label{eq:opAMPlinmemory}
\end{align}
Here,  the $n \times n$ matrices $ A_{ts}$ describe  the long-term dependence and the functions $f_t$ are applied only to the prior iteration, reducing the complexity of both the implementation and the analysis. 

To describe the SE, we define the collection of $n \times n$ matrices $\{C_{st} : 0 \le s < t\}$ according to 
\begin{align}
    \begin{bmatrix}
        \Id_n & \\
        -A_{10} & \Id_n & \\
         \vdots &&  \ddots \\
         -A_{t0}&   \cdots & - A_{t,t-1}&  \Id_n 
    \end{bmatrix}^{-1}\! = \begin{bmatrix}
        \Id_n & \\
        C_{10} & \Id_n & \\
         \vdots &&  \ddots \\
         C_{t0}&   \cdots & C_{t,t-1}&  \Id_n 
    \end{bmatrix} 
\end{align}
The fact that these matrices are block unitriangular  ensures that the inverse exists. Furthermore, using the convention that $C_{tt} = \Id_n$, the  mapping from $\{A_{ts}\}$ to $\{C_{ts}\}$  can be expressed recursively via 
\begin{align}
C_{ts} = \sum_{r =0}^{t-1} A_{t r} C_{rs}  
\label{eq:AtoC}
\end{align} 
The distribution of the iterates is compared with a zero-mean Gaussian process $\{y_t\}$ whose covariance is defined recursively according to
\begin{align}
   \cov(y_s, y_t)  
& = \sum_{s' \le s, t' \le t} \,\sum_{l,k=1}^K   q_{s'lt'k} C_{ss'}  L_{s'l}( L_{t'k} C_{tt'})^\top \label{eq:SEopAMPlinmemory}
\end{align}
where $q_{sltk}$ is defined in \eqref{eq:SEopAMP}. 

\begin{assumption}\label{ass:Ats}
 $\|A_{ts}\|_{op} \le C''$ for all $s,t$  where $C''$ is a positive constant that does not depend on $n$.
\end{assumption}

\begin{theorem}\label{thm:opAMPlinmemory}
Let $\{x_t\}$ be generated by \eqref{eq:opAMPlinmemory} and let $\{y_n\}$ be the zero-mean Gaussian process defined by the SE. Suppose Assumptions~\ref{ass:ft}, \ref{ass:Lt}, and \ref{ass:Ats} hold, $Z \sim \mathsf{GOE}(n)$, and 
the matrices $\{B_{ts} : 0 \le s < t \}$ are given by
\begin{align}
B_{ts} 
& = \sum_{k,l=1}^K \frac{1}{n} \gtr\big( R_{tk} \ex{\mathsf{D} f_t\left( y_{t-1} \right) }  C_{t-1, s} L_{sl} \big)L_{tk} R_{sl}
\label{eq:BtsopAMPlinmemory}
\end{align}
where $\{ L_{tk}, R_{tk} \}$ provide the decomposition of $\cL_t$ given in \eqref{eq:cLt} and $\{C_{ts}\}$ are defined by \eqref{eq:AtoC}.
Then, for any fixed number of iterations $T$, there exists  a sequence (in $n$) of couplings between $x_{\scriptscriptstyle{\le T}}$ and $y_{\scriptscriptstyle{\le T}}$ such that
\begin{align}
    \frac{\|x_{\scriptscriptstyle{\le T}} - y_{\scriptscriptstyle{\le T}} \|}{\sqrt{n}} \xrightarrow[n \to \infty]{\mathrm{p}} 0
\end{align}
\end{theorem}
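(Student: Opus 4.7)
The plan is to reduce the autoregressive recursion \eqref{eq:opAMPlinmemory} to the general OpAMP recursion \eqref{eq:opAMP} by a block-triangular change of variables, and then invoke Theorem~\ref{thm:opAMP}. Define the auxiliary iterates
\begin{align*}
    \tilde x_t \coloneqq x_t - \sum_{s<t} A_{ts}\, x_s ,
\end{align*}
so that the block-unitriangular structure defining $\{C_{ts}\}$ guarantees invertibility and yields the inverse relation $x_t = \sum_{s \le t} C_{ts}\, \tilde x_s$ with the convention $C_{tt} \coloneqq \Id_n$. Substituting into \eqref{eq:opAMPlinmemory} cancels the $\sum_{s<t} A_{ts} x_s$ terms, leaving
\begin{align*}
    \tilde x_t = \cL_t(Z)\, f_t(x_{t-1}) - \sum_{s<t} B_{ts}\, f_s(x_{s-1}) .
\end{align*}
Introducing new denoisers $\tilde f_t(\tilde x_{<t}) \coloneqq f_t\bigl(\sum_{r \le t-1} C_{t-1,r}\, \tilde x_r\bigr)$, which by construction equal $f_t(x_{t-1})$, rewrites this as $\tilde x_t = \cL_t(Z)\, \tilde f_t(\tilde x_{<t}) - \sum_{s<t} B_{ts}\, \tilde f_s(\tilde x_{<s})$, an instance of the general OpAMP recursion to which Theorem~\ref{thm:opAMP} can be applied.

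Next I would verify that Assumption~\ref{ass:ft} lifts from $\{f_t\}$ to $\{\tilde f_t\}$. A straightforward induction based on the recursion \eqref{eq:AtoC} and Assumption~\ref{ass:Ats} produces a bound $\|C_{ts}\|_{op} \le K_T$ for all $s \le t \le T$, depending only on $T$ and $C''$. Consequently each $\tilde f_t$ is Lipschitz with constant $L K_T \sqrt{T}$ uniformly in $n$, and $\|\tilde f_t(0)\|/\sqrt n$ remains uniformly bounded. Theorem~\ref{thm:opAMP} then produces a Gaussian process $\{\tilde y_t\}$ with $\|\tilde x_{\le T} - \tilde y_{\le T}\|/\sqrt n \to 0$ in probability, provided the debiasing matrices take the form dictated by \eqref{eq:BtsopAMP} for the denoisers $\{\tilde f_t\}$. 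By the chain rule, $\mathsf D_s \tilde f_t(\tilde y_{<t}) = \mathsf D f_t(y_{t-1})\, C_{t-1, s}$, where $y_{t-1} \coloneqq \sum_{r \le t-1} C_{t-1,r}\, \tilde y_r$, and substituting this into that debiasing formula recovers exactly the stated expression \eqref{eq:BtsopAMPlinmemory}.

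Finally, setting $y_t \coloneqq \sum_{s \le t} C_{ts}\, \tilde y_s$ gives a zero-mean Gaussian process. The covariance identity
\begin{align*}
    \cov(y_s, y_t) = \sum_{s' \le s,\, t' \le t} C_{ss'}\, \cov(\tilde y_{s'}, \tilde y_{t'})\, C_{tt'}^\top,
\end{align*}
combined with the SE for $\{\tilde y_t\}$ from Theorem~\ref{thm:opAMP} and the observation that $\tilde f_s(\tilde y_{<s}) = f_s(y_{s-1})$ (so the inner-product trace $q_{s'lt'k}$ matches \eqref{eq:qsltk}), yields the stated SE \eqref{eq:SEopAMPlinmemory}. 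To transfer the coupling, observe that $x_{\le T} - y_{\le T}$ is the block-triangular linear image of $\tilde x_{\le T} - \tilde y_{\le T}$ with blocks $C_{ts}$, so the operator-norm bound on $C_{ts}$ gives $\|x_{\le T} - y_{\le T}\| \le (T+1) K_T\, \|\tilde x_{\le T} - \tilde y_{\le T}\|$, and the desired convergence in probability follows from the bound for $\tilde x$.

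\textbf{Main obstacle.} The reduction is conceptually clean; the substantive work is bookkeeping. One must carefully verify that the chain-rule computation for $\mathsf D_s \tilde f_t$ aligns with \eqref{eq:BtsopAMPlinmemory} in all its indices, that the covariance algebra for the linear transform $y = C \tilde y$ reproduces the SE \eqref{eq:SEopAMPlinmemory} with transposes and index placements correct, and that the uniform operator-norm bound on the $C_{ts}$'s produced by iterating \eqref{eq:AtoC} yields the Lipschitz assumption for $\{\tilde f_t\}$ required by Theorem~\ref{thm:opAMP}. No further probabilistic analysis beyond Theorem~\ref{thm:opAMP} is needed.
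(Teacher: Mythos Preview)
Your proposal is correct and follows essentially the same route as the paper: both introduce $\tilde x_t = x_t - \sum_{s<t} A_{ts} x_s$, invert via the $C_{ts}$ matrices to express $\{\tilde x_t\}$ as an OpAMP recursion with denoisers $\tilde f_t(\tilde x_{<t}) = f_t\bigl(\sum_{r<t} C_{t-1,r}\tilde x_r\bigr)$, apply Theorem~\ref{thm:opAMP}, use the chain rule $\mathsf D_s \tilde f_t = \mathsf D f_t(y_{t-1}) C_{t-1,s}$ to recover \eqref{eq:BtsopAMPlinmemory}, and push the SE and coupling forward along $y_t = \sum_{s\le t} C_{ts}\tilde y_s$. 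If anything, you are more careful than the paper in explicitly checking that Assumption~\ref{ass:ft} transfers to $\{\tilde f_t\}$ via the uniform operator-norm bound on $C_{ts}$ and in spelling out the operator-norm inequality that propagates the coupling from $\tilde x$ to $x$.
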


Note that in view of \eqref{eq:AtoC}, the matrices $C_{t-1,s}$ appearing in the correction term at time $t$ can be computed in recursively in terms of the matrices $C_{t-2,s}$ used in the previous iteration. 

\subsection{Projection AMP}
The projection AMP framework is a specialization of \eqref{eq:opAMPlinmemory} where each linear operator is given by $\cL_t(Z) = \Pi_t Z$ for an $n \times n$ projection matrix $\Pi_t$ and the autoregressive linear memory term is the complementary projection matrix $\Pi_t^{\perp} = \Id - \Pi_t$ applied to the past iteration. 

The projection AMP recursion can be expressed as
\begin{align}
    x_t & = \Pi_t \Big(  Z   f_t(x_{t-1})  - \sum_{s < t} b_{ts} f_s(x_{s-1})  \Big) + \Pi_{t}^\perp x_{t-1}  \label{eq:projAMP}
\end{align}
A useful property of this formulation is that the memory terms depend only on the projector sequence and the debiasing terms are described by scalars. 

The additional structure of projection matrices also leads to simplifications for the SE. The covariance of the zero-mean Gaussian process $\{y_t\}$ is given by 
    \begin{align}
   \cov(y_s, y_t)  
& = \sum_{s' \le s, t' \le t}    \frac{1}{n} \ex{\langle  f_{s'}(y_{s'-1} ),  f_{t'}(y_{t'-1}) \rangle } C_{ss'}  \Pi_{s'}( \Pi_{t'} C_{tt'})^\top \label{eq:SEprojAMP}
\end{align}
where the matrices $C_{ts}$ are defined by
\begin{align}
C_{ts}
&= \begin{dcases}
\Id_n & s = t\\
    \Pi_t^{\perp} \Pi_{t-1}^{\perp} \cdots \Pi_{s+2}^\perp \Pi_{s+1}^{\perp} & 0 \le s < t
\end{dcases} \label{eq:Ctsproj}
\end{align}

\begin{theorem}\label{thm:projAMP}
Let $\{x_t\}$ be generated by \eqref{eq:projAMP} and let $\{y_n\}$ be the zero-mean Gaussian process defined by the SE. Suppose Assumption~\ref{ass:ft} holds, $Z \sim \mathsf{GOE}(n)$, and 
the scalars $\{b_{ts} : 0 \le s < t \}$ are given by
\begin{align}
 b_{ts} & =  \frac{1}{n} \gtr\left( \ex{ \mathsf{D} f_t(y_{t-1}) } C_{t-1,s}  \Pi_{s} \right) \label{eq:bts_projAMP}
\end{align}
Then, for any fixed number of iterations $T$, there exists  a sequence (in $n$) of couplings between $x_{\scriptscriptstyle{\le T}}$ and $y_{\scriptscriptstyle{\le T}}$ such that
\begin{align}
    \frac{\|x_{\scriptscriptstyle{\le T}} - y_{\scriptscriptstyle{\le T}} \|}{\sqrt{n}} \xrightarrow[n \to \infty]{\mathrm{p}} 0
\end{align}
\end{theorem}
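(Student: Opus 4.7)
My plan is to derive Theorem~\ref{thm:projAMP} as a direct specialization of Theorem~\ref{thm:opAMPlinmemory}. Concretely, I would realize each linear operator $\cL_t(Z)=\Pi_t Z$ via the decomposition in \eqref{eq:cLt} with $K=1$, $L_{t1}=\Pi_t$, and $R_{t1}=\Id_n$, and realize the memory term $\Pi_t^\perp x_{t-1}$ by taking the autoregressive coefficients $A_{t,t-1}=\Pi_t^\perp$ and $A_{ts}=0$ for $s<t-1$. Substituting these choices into \eqref{eq:opAMPlinmemory} yields
\begin{align*}
    x_t = \Pi_t Z\, f_t(x_{t-1}) + \Pi_t^\perp x_{t-1} - \sum_{s<t} B_{ts}\, f_s(x_{s-1}),
\end{align*}
which will match the projection AMP recursion \eqref{eq:projAMP} once I show $B_{ts}$ factors as a scalar multiple of $\Pi_t$.

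Next I would verify the hypotheses and the auxiliary quantities transport correctly. Assumption~\ref{ass:Lt} holds with $C'=1$ since both $\Pi_t$ and $\Id_n$ have operator norm at most one; Assumption~\ref{ass:Ats} holds with $C''=1$ since $\|\Pi_t^\perp\|_{op}\le 1$; Assumption~\ref{ass:ft} is inherited verbatim. For these $A_{ts}$, the recursion \eqref{eq:AtoC} collapses to $C_{ts}=\Pi_t^\perp C_{t-1,s}$ with $C_{tt}=\Id_n$, and a one-line induction on $t-s$ recovers the closed form in \eqref{eq:Ctsproj}.

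It then remains to check that the state evolution and debiasing formulas specialize as claimed. With $K=1$ and the chosen matrices, the coefficient $q_{s'lt'k}$ in \eqref{eq:qsltk} reduces to $\frac{1}{n}\ex{\langle f_{s'}(y_{s'-1}),f_{t'}(y_{t'-1})\rangle}$, and substitution into \eqref{eq:SEopAMPlinmemory} recovers \eqref{eq:SEprojAMP} term-by-term, so the two Gaussian processes coincide. Likewise, the debiasing formula \eqref{eq:BtsopAMPlinmemory} collapses to
\begin{align*}
    B_{ts} = \frac{1}{n}\gtr\!\bigl(\ex{\mathsf{D} f_t(y_{t-1})}\,C_{t-1,s}\,\Pi_s\bigr)\,\Pi_t = b_{ts}\,\Pi_t,
\end{align*}
with $b_{ts}$ exactly as in \eqref{eq:bts_projAMP}, so $\sum_{s<t} B_{ts}f_s(x_{s-1})=\Pi_t\sum_{s<t}b_{ts}f_s(x_{s-1})$, matching the correction term in \eqref{eq:projAMP}. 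Applying Theorem~\ref{thm:opAMPlinmemory} to the specialized recursion then delivers a coupling between $x_{\le T}$ and the projection-AMP Gaussian process $y_{\le T}$ satisfying the claimed convergence in probability. There is no genuine analytic obstacle — the Gaussian concentration, Jacobian control, and inductive coupling construction are all carried by Theorem~\ref{thm:opAMPlinmemory}; the only step requiring care is matching the bookkeeping, and in particular recognizing that the single-step memory $\Pi_t^\perp x_{t-1}$ arises from the autoregressive template precisely because all $A_{ts}$ with $s<t-1$ vanish.
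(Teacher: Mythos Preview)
Your proposal is correct and follows essentially the same route as the paper: specialize Theorem~\ref{thm:opAMPlinmemory} with $\cL_t(Z)=\Pi_t Z$ (so $K=1$, $L_{t1}=\Pi_t$, $R_{t1}=\Id_n$) and single-step autoregressive memory, then read off $C_{ts}$, the SE, and $B_{ts}=b_{ts}\Pi_t$. In fact your choice $A_{t,t-1}=\Pi_t^\perp$ is the right one needed to recover both the memory term $\Pi_t^\perp x_{t-1}$ and the closed form \eqref{eq:Ctsproj}; the paper's proof writes $A_{t,t-1}=\Pi_{t-1}$, which appears to be a typo.
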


The SE for projection AMP admits further simplifications for the special case of commuting orthogonal projections, i.e., each $\Pi_s$ is symmetric and $\Pi_s \Pi_t = \Pi_t \Pi_s$ for all $s,t$. Starting with \eqref{eq:SEprojAMP} and then using the fact that the $\Pi_t$ and $C_{ts}$ commute, one finds that $\cov(y_t)$ satisfies the simple recursion
\begin{align}
\cov(y_t) & = 
\frac{1}{n} \ex{\|f_t(y_{t-1})\|^2} \Pi_t  +  \cov(y_{t-1}) \Pi_t^\perp  \label{eq:covprojAMP}
\end{align} 
where $ \cov(y_{t-1}) \Pi_t^\perp = \Pi_t^\perp \cov(y_{t-1})$ is symmetric. In particular, if $\Pi_0 = \Id_n$ and every $f_t$ is supported on the sphere of radius $\sigma \sqrt{n}$ then it follows that $\cov(y_t) = \sigma^2\Id_n$ is constant across iterations.   

\section{Matrix Estimation with Partial Updates}
\label{sec:stochastic_updates}
In this section, we show how our linear operator framework can be applied to settings where the entire data matrix cannot be applied at each iteration. For concreteness, we focus on the rank-one spiked matrix model
\begin{align}
    M = \frac{\lambda}{n} \theta \theta^\top + Z \label{eq:spikedMatrix}
\end{align}
where $\lambda \ge 0$ is a positive scalar,  $\theta = (\theta_1, \dots, \theta_n) \in \bbR^{n}$ is the unknown signal, and  $Z \sim \mathsf{GOE}(n)$ is additive noise. The goal is to recover $\theta$ from $M$ subject to the constraints that only a subset of the rows of $M$ can be accessed at each iteration. 

The update constraints are modeled using projection AMP with projections of the form $\Pi_t = \diag(\delta_t)$ where $\delta_t \in \{ 0,1\}^n$ is a binary vector indicating which rows can be updated in the $t$-th iteration. 
For a given sequence of ``denoising'' functions $\{f_t\}$, we construct a sequence of estimates $\{\hat{\theta}_t\}$ using the following version of projection AMP:
\begin{align} \label{eq:diagProjAMPupdates}
\hat{\theta}_t & = f_t(x_{t- 1})\nonumber\\
x_t
& = \delta_t \circ  \Big( M \hat{\theta}_t  - \sum_{s<t} b_{ts} \hat{\theta}_s \Big) + (\mathsf{1}-  \delta_t) \circ x_{t- 1} 
\end{align}
Here, $\circ$ denotes the elementwise (Hadamard) product and $\mathsf{1}$ denotes the all ones vector. The scalar debiasing coefficients $b_{ts}$ are defined as a function of the SE according to \eqref{eq:bts_tau}. To circumvent some cumbersome details that arise with a generic initialization, we will assume throughout this section   that every row of the matrix is updated in the the first time step, i.e.,  $\delta_0 \equiv \mathsf{1}$ is the all-ones vector. 

\paragraph*{State evolution.} 
Combining Theorem~\ref{thm:projAMP} with the recentering arguments given in Appendix~\ref{sec:AMPcentering}, we find that the iterates from the recursion \eqref{eq:diagProjAMPupdates} are well approximated by a Gaussian process $\{y_t\}$, whose mean and covariance are defined by a two-parameter SE.  
The SE is described in terms of the scalar overlap parameters
\begin{align} 
    q_t &= \frac{1}{n} \ex{\| f_t(y_{t-1})\|^2}; \quad 
    r_t = \frac{1}{n} \ex{\langle \theta , f_t(y_{t-1})\rangle} \label{eq:qtrt}
\end{align}
where $q_0 = \|f_0\|^2/n, \ r_0 = \langle \theta, f_0 \rangle / n$ are the overlaps arising from the initial estimate $\hat{\theta}_0 = f_0 \in \bbR^n$. 
Starting with $\ex{ y_{0} } = \lambda  r_0 \theta$ and $\cov(y_0) = q_0\Id_n$, the mean and covariance of $\{y_t\}$ are updated recursively according to 
\begin{align} \label{eq:spikedSE}
    \ex{y_t} & = \lambda r_t \delta_t \circ  \theta + ( 1 - \delta_t) \circ  \ex{ y_{t-1}} \ , \nonumber \\
    \cov( y_t) & = q_t \diag(\delta_t)  +  \cov( y_{t-1}) ( \Id_n - \diag( \delta_t)) 
\end{align}
\begin{remark}
    State evolution is defined in terms of the ground truth signal $\theta$, that can be assumed to be either fixed or random but is generally unknown. Any computationally feasible AMP algorithm will not explicitly rely on knowledge of $\theta$, and instead will estimate $r_t$ via concentration arguments based on structural information about theta (e.g., sparsity level, noise variance, limiting empirical distribution).
\end{remark}
A useful property of the SE is that, for each time step $t$, the distribution of the $i$-th component of the Gaussian vector $y_t = (y_{1t}, \dots, y_{tn})$  depends only on the signal component $\theta_i$ and the last time step in which  $i$-th row of the matrix was updated. To see this, observe that the $\cov(y_t)$ is diagonal for all iterations, and thus each $y_t$ has independent components. For a given indicator sequence $\{\delta_t\}$, the index for the most recent update of the $i$-th row before time step $t$ is encoded by the function
 $\tau : \bbN \times \{1,\dots,n\} \rightarrow \bbN_0$, defined by
\begin{align} \label{eq:tauDefinition}
 \tau(t, i) \coloneqq \max \{ s  \in  \{ 0, 1, \dots, t-1\} : \delta_{si} = 1 \}
\end{align}
where $\delta_{si}$ is the $i$-th element of the binary vector $\delta_s$.

From the recursive structure in \eqref{eq:spikedSE}, it follows that every component that was last updated at time step $s$ is described by a scalar Gaussian noise model with  parameters $(q_s,r_s)$.  Specifically, the variables  $y_{t1}, \dots, y_{tn}$  are distributed independently with 
\begin{align}
 \tau(t+1,i) = s \quad \implies  \quad    y_{ti} \sim \normal\left( \lambda r_{s} \theta_i, q_{s} \right)  \label{eq:yts_model}
\end{align}
for all $t \in \bbN_0$. We note that the function $\tau$ provides an alternative representation of the indicator sequence $\{\delta_t\}$, which can be recovered via the correspondence $\delta_{ti} = \one\{ \tau(t+1,i) = t\}$. 

\paragraph*{Debiasing coefficients.}
The function $\tau$ also provides a compact representation of the debiasing coefficients defined by \eqref{eq:bts_projAMP}. Observe that the $n \times n$ matrix $C_{t-1,s} \Pi_s$ appearing in the specification of $b_{ts}$ is given by 
\begin{align}
  C_{t-1,s} \Pi_s = \begin{dcases}
  \diag( (\mathsf{1}-\delta_{t-1})  \circ(\mathsf{1} -  \delta_{t-2})  \cdots  (\mathsf{1}- \delta_{s+1}) \circ \delta_s), &  s  =  0, 1, \dots, t-2\\
  \diag(\delta_{t-1}) & s = t-1
  \end{dcases}
    \label{eq:CtsPs}
\end{align}
This is a diagonal binary matrix with whose $i$-th diagonal entry is the indicator of the event that the $i$-th row was last updated at time step $s$, i.e., the event $\{ \tau(t,i) = s\}$. Plugging \eqref{eq:CtsPs} into the definition of projection AMP corrections \eqref{eq:bts_projAMP}, we see that the debiasing coefficients can be expressed as
\begin{align}
    b_{ts} = \frac{1}{n} \sum_{i=1}^n \one\{ \tau(t, i) = s \} \ex{ \mathsf{D}_{ii} f_t(y_{t-1}) } \label{eq:bts_tau}
\end{align}
where $\mathsf{D}_{ii}$ denotes the partial derivative of the $i$-th output $f_{ti}(y_{t-1})$ with respect to the $i$-th input $y_{t-1,i}$. 

\subsection{Asymptotic State Evolution}\label{sec:asymptotic_SE}
In this section, we obtain a simplified characterization of the SE by focusing on the high-dimensional limit for sequence of problems, with increasing dimension $n$,  where the signal $\theta \in \bbR^n$, initialization $\hat{\theta}_0 \in \bbR^n$,  and indicator sequence $\{\delta_t\} \in \{0,1\}^{n \times \bbN_0}$ together satisfy a decoupling property. 
\begin{assumption} \label{as:singLetterAssumptions}
    $M$ is given by \eqref{eq:spikedMatrix} and the Projection AMP recursion \eqref{eq:diagProjAMPupdates} satisfies the following conditions:
    \begin{enumerate}
    \item For each  $t \in \bbN_0$, the joint empirical measure of $\{ ( \theta_i, \hat{\theta}_{0i} ,  \delta_{0i}, \dots, \delta_{ti}) : i \in [n]\} $ converges in quadratic Wasserstein distance to a limiting probability measure the form  $\mu \otimes \nu_t$ where $\mu$ is a distribution on $\bbR^2$ whose marginals have unit second moments and $\nu_t$ is is the distribution of the first $t$ entries of a binary string drawn from a probability measure $\nu$ on $\{0, 1\}^{\bbN_0}$. 
    \item For each $t \in \bbN$, the denoiser $f_t \colon \bbR^n \to \bbR^n$ is separable and is given by
    \begin{align}
        f_{ti}(x_{t-1}) = \eta_{t}(x_{t-1,i}; \,  q_{\tau(t,i)}, \lambda r_{\tau(t,i)})  \label{eq:ft_eta}
    \end{align}
    where $\eta_t\colon \bbR \times \bbR_+ \times \bbR \to \bbR$ is a Lipschitz continuous scalar denoiser that is fixed for all $n$ and $\tau$ is the index function defined in \eqref{eq:tauDefinition}.
\end{enumerate}
\end{assumption}

According to our convention that every row is updated at the first time step ($t=0$), the measure $\nu$ is supported on binary strings whose first entry is one. For each $t \in \bbN$, we define $p_t$ to the probability mass function for the position of the last non-zero entry occurring  before time $t$, i.e., 
\begin{align}
    p_t(s) \coloneqq \nu_{t}\big( \{ \omega \in \{0,1\}^t : \omega_s = 1, \omega_{s+1} = \cdots = \omega_{t-1} = 0\}\big) \end{align}
for all $s \in \{ 0, 1, \dots, t-1\}$. This probability mass function can also be defined directly via the limiting empirical measure as 
\begin{align}
    p_t(s) \coloneqq \lim_{n \to \infty}  \frac{1}{n} \sum_{i=1}^n \one\{ \tau(t, i ) = s \}   \label{eq:limitingmeasure}
\end{align}
where Assumption~\ref{as:singLetterAssumptions}.1 ensures that the limit exists. 

We recall that $(q_t, r_t)$ correspond to the finite-$n$ overlap parameters of SE at iteration $t$ \eqref{eq:qtrt}. To define their asymptotic limits, we introduce the pairs $(\sigma^2_t, \rho_t) \in \bbR_+ \times \bbR$, that depend on the limiting measure $\mu \otimes \nu$ and the denoiser $\eta$ as follows. The initialization is given by 
\begin{align}
\sigma^2_0 = \int  \hat{u}^2   \,  d \mu(u, \hat{u})  ,  \qquad \rho_0 = \int  u \hat{u} \,  d \mu(u, \hat{u}) 
\end{align}
For $t \ge 1$, the update is given by
\begin{align} \label{eq:se_singleletter_updates}
    \sigma^2_t &=  \sum_{s<t} \psi_t(\sigma^2_s, \lambda \rho_s) \,  p_{t}(s), \qquad  \rho_t  =  \sum_{s<t} \phi_t(\sigma^2_s, \lambda \rho_s) \,  p_t(s)
\end{align}
where 
the functions $\psi_t, \phi_t : \bbR_+ \times \bbR \rightarrow \bbR_+$ are defined as
\begin{align}
    \psi_t( v,w) &\coloneqq  \int  \ex{( \eta_t( w u  + \sqrt{v}z ; v,w) )^2}  \, d\mu(u, \hat{u} ) \nonumber \\
    \phi_t( v,w) & \coloneqq  \int  \ex{ u \,   \eta_t( wu  + \sqrt{v} z; v,w) )  } \,  d\mu(u, \hat{u})  
\end{align}
The expectation is taken with respect to a standard Gaussian variable $z \sim \normal(0, 1)$. From the product assumption on the limiting empirical measure $\mu \otimes \nu$, it follows that, for any $t \in \bbN$, the asymptotic SE $(\sigma^2_t, \rho_t)$ corresponds to the large-$n$ limit of the SE parameters $(q_0,r_0)$:
\begin{align}
    \lim_{n \rightarrow \infty} q_t = \sigma^2_t, \quad \lim_{n \rightarrow \infty} r_t = \rho_t
\end{align}
We note that when applied to full matrix updates (i.e., $\delta_t \equiv \mathsf{1}$) these SE update equations recover the usual form the SE appearing in AMP analysis where $p_t(s)$ places all of its mass at the previous time step. 

\subsection{Power Iteration} \label{sec:powerIterSE}
In this section, we provide asymptotic guarantees on the limiting absolute empirical correlation of the projection AMP estimates $\{\hat{\theta}_t\}$ when the denoisers $f_t$ are chosen to be the projection onto the sphere of radius $\sqrt{n}$. The projection AMP algorithm takes the form
\begin{align} \label{eq:opampProjDenoiser}
    x_t = \delta_t \circ \Big(M \hat\theta_t - \frac{1}{\sqrt{n}\|x_{t-1}\|}\sum_{s<t} w_{ts} \hat\theta_s \Big) + (\mathsf{1}-\delta_t) \circ x_{t-1}, \quad \hat\theta_{t+1} = \frac{\sqrt{n}}{\|x_t\|}x_t
\end{align}
for $w_{ts} \coloneqq \sum_{i=1}^n \one\{\tau(t,i) = s\}$, the number of coordinates that were last updated at time $s$. We remark that this recursion encompasses as a special case the distributed power method \eqref{eq:opampPowerIter} up to a scaling factor on $\theta$. Assumption~\ref{as:singLetterAssumptions} implies that $(q_0,r_0)$ satisfy
\begin{align}
        q_0 \coloneqq \frac{1}{n}\|\hat\theta_0\|^2 \xrightarrow[n\rightarrow\infty]{} 1, \quad
        r_0 \coloneqq \frac{1}{n}\langle \theta, \hat{\theta}_0\rangle \xrightarrow[n\rightarrow\infty]{} \int u \hat{u}\, d\mu(u,\hat{u}) \eqqcolon \rho_0
\end{align}
Starting with $\rho_0$, then, we define the recurrence relation
\begin{align} \label{eq:PowMethSingleLetter}
    \rho_t = \frac{\lambda \sum_{s<t} \rho_s p_t(s)}{\sqrt{\sum_{s<t}(\lambda^2 \rho_s^2 + 1) p_t(s)}}
\end{align}
The following result relates the sequence $\{\rho_t\}$ and with the asymptotic correlation between the signal $\theta$ and the distributed power method estimates $\{\hat\theta_t\}$ described in \eqref{eq:opampProjDenoiser}, and is proved in Appendix~\ref{sec:powMethSingleLetterProof}.
\begin{theorem} \label{th:opampPMsingleLetter}
     Let $M$ be a spiked matrix model \eqref{eq:spikedMatrixUnitSignal}, $(\theta,\hat\theta_0)$ satisfy Assumption~\ref{as:singLetterAssumptions} and $\|\hat\theta_0\| = \sqrt{n}$. Consider the estimate sequence $\{\hat \theta_t\}$ produced by \eqref{eq:opampProjDenoiser}. Then, for each $t \in \bbN$, 
    \begin{align}
        \left| \frac{1}{n}\langle \theta, \hat \theta_t \rangle - \rho_t \right| \xrightarrow[n \rightarrow \infty]{\mathrm{p}} 0
    \end{align}
    for $\{\rho_t\}$ defined recursively as in \eqref{eq:PowMethSingleLetter}.
\end{theorem}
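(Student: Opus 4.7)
My plan is to derive Theorem~\ref{th:opampPMsingleLetter} as a direct consequence of the Gaussian coupling in Theorem~\ref{thm:projAMP}, followed by a scalar computation on the limiting Gaussian process. The argument proceeds by induction on $t$: assuming the conclusion together with the relevant norm concentration holds for all times $s<t$, I derive the statement at time $t$. Before invoking Theorem~\ref{thm:projAMP}, I would address the fact that the projection denoiser $f_t(x)=\sqrt{n}\,x/\|x\|$ is not globally Lipschitz and therefore violates Assumption~\ref{ass:ft}. The standard remedy is to replace $f_t$ with the regularization $\tilde f_t(x)=\sqrt{n}\,x/\max(\|x\|,\epsilon\sqrt{n})$, which is $\epsilon^{-1}$-Lipschitz. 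Since the projection denoiser satisfies $\|\hat\theta_t\|^2=n$ exactly, the finite-$n$ SE parameter $q_t$ equals $1$ for every $t$, so the limiting squared norm $\|y_{t-1}\|^2/n$ converges to a quantity that is at least $1$, which is bounded away from zero uniformly in $t$. For any fixed horizon $T$ and sufficiently small $\epsilon$, the truncated and untruncated recursions coincide with probability tending to one.

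Applying Theorem~\ref{thm:projAMP} to the regularized recursion, combined with the recentering arguments of Appendix~\ref{sec:AMPcentering} for the spiked model, furnishes a Gaussian process $\{y_t\}$ with independent coordinates satisfying $y_{t-1,i}\sim\normal(\lambda r_{\tau(t,i)}\theta_i,\,q_{\tau(t,i)})$ together with the coupling $\|x_{\scriptscriptstyle{\le T}}-y_{\scriptscriptstyle{\le T}}\|/\sqrt{n}\stackrel{\mathrm{p}}{\to}0$. Writing
\[
\frac{1}{n}\langle\theta,\hat\theta_t\rangle
= \frac{1}{\sqrt{n}}\cdot\frac{\langle\theta,x_{t-1}\rangle}{\|x_{t-1}\|},
\]
I replace $x_{t-1}$ by $y_{t-1}$ up to $o_p(\sqrt{n})$ error in both numerator and denominator, then evaluate each via the partition of $\{1,\dots,n\}$ by the value of $\tau(t,i)$. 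Using Assumption~\ref{as:singLetterAssumptions} together with $\int u^2\,d\mu = 1$, the identity $\E[\theta_i y_{t-1,i}\mid\tau(t,i)=s]=\lambda r_s\theta_i^2$ gives $\tfrac{1}{n}\langle\theta,y_{t-1}\rangle\to\sum_{s<t}\lambda\rho_s p_t(s)$, while $\E[y_{t-1,i}^2\mid\tau(t,i)=s]=\lambda^2 r_s^2\theta_i^2+q_s$ together with $q_s\to 1$ gives $\|y_{t-1}\|^2/n\to\sum_{s<t}(\lambda^2\rho_s^2+1)p_t(s)$. The ratio yields exactly $\rho_t$ as defined in \eqref{eq:PowMethSingleLetter}, and the induction closes after verifying that the inductive hypothesis on the SE parameters $(q_s,r_s)\to(1,\rho_s)$ for $s<t$ suffices to pin down the limit at time $t$.

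The main obstacle is managing the combined non-Lipschitz and non-separable nature of the projection denoiser within the stated framework. The truncation step must be paired with the inductive lower bound on $\|y_{t-1}\|^2/n$, since otherwise the regularization could distort the limit; fortunately the identity $q_t\equiv 1$ coming directly from $\|\hat\theta_t\|^2=n$ makes this lower bound transparent. A more subtle issue is that Assumption~\ref{as:singLetterAssumptions}.2 formally demands separability, which projection onto a sphere violates, but this is reconciled by exploiting that $\|y_{t-1}\|/\sqrt{n}$ concentrates on a deterministic constant $\kappa_t=\sqrt{\sum_{s<t}(\lambda^2\rho_s^2+1)p_t(s)}$, so the projection is asymptotically equivalent to the separable map $y\mapsto y/\kappa_t$. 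Finally, the continuity of the ratio $\langle\theta,\cdot\rangle/\|\cdot\|$ once the denominator is bounded away from zero, together with the Wasserstein-2 convergence in Assumption~\ref{as:singLetterAssumptions}, justifies passing from the $L^2$ coupling of Theorem~\ref{thm:projAMP} to the scalar convergence in probability claimed in the theorem.
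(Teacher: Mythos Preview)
Your proposal shares the central insight with the paper: the sphere-projection denoiser is asymptotically equivalent to the separable linear map $y\mapsto y/\kappa_t$ once $\|y_{t-1}\|/\sqrt n$ concentrates. However, the paper executes this idea differently, and your route leaves one genuine gap.

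The paper does not truncate the projection denoiser. Instead it introduces from the outset a \emph{linear, separable} comparison recursion $\tilde x_t$ with the deterministic denoiser $x\mapsto x/\sqrt{\alpha_t}$, where $\alpha_t=\sum_{s<t}(\lambda^2 r_s^2+q_s)p_t(s)$. This recursion satisfies Assumption~\ref{ass:ft} and Assumption~\ref{as:singLetterAssumptions}.2 exactly, so Theorem~\ref{thm:projAMP} and the asymptotic SE of Section~\ref{sec:asymptotic_SE} apply with no caveats. The proof then shows by a separate induction that $\|x_t-\tilde x_t\|/\sqrt n\to 0$ in probability.

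The gap in your argument is the debiasing coefficients. The recursion \eqref{eq:opampProjDenoiser} carries \emph{random} correction scalars $w_{ts}/(\sqrt n\,\|x_{t-1}\|)$ that depend on the iterates themselves, whereas Theorem~\ref{thm:projAMP} is stated for the deterministic $b_{ts}$ of \eqref{eq:bts_projAMP}. Your truncation handles the Lipschitz violation but says nothing about this discrepancy; when you write ``the truncated and untruncated recursions coincide with probability tending to one,'' that regularized recursion still has data-dependent coefficients and is therefore not an instance of Theorem~\ref{thm:projAMP}. You must either invoke Remark~\ref{remark:bts_hat} after checking that the empirical coefficients are consistent for the theoretical $b_{ts}$ of the truncated denoiser, or---as the paper does---bypass the issue entirely by comparing directly to a recursion whose coefficients are deterministic by construction. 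A minor related point: your claim that $q_t\equiv 1$ exactly conflates the sample identity $\|\hat\theta_t\|^2=n$ with the SE parameter $q_t=\tfrac1n\E\|f_t(y_{t-1})\|^2$; for the truncated denoiser this expectation is only asymptotically $1$, so the lower bound on $\|y_{t-1}\|^2/n$ must itself be established inductively rather than asserted.
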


\begin{remark}
The above result is asymptotic in nature and, according to Assumption~\ref{as:singLetterAssumptions}, it requires a form of warm start $\hat\theta_0$ such that the limiting correlation $\rho_0$ is strictly positive. In finite dimensions, any random initialization will achieve non-zero overlap $r_0$ with high probability, and the AMP-adjusted power method described in~\eqref{eq:opampPowerIter} can be observed to converge to a non-trivial fixed point. The value of the corresponding fixed-point overlap can be approximated to a high degree of precision by considering the SE fixed point under a small initial perturbation of $\rho_0$ about zero.
\end{remark}

\subsection{Bayes-Optimal Denoising} 
One of the key strengths of the AMP framework is that prior information about the unknown signal can be incorporated naturally via the specification of the denoising functions. Throughout this section, we consider the Bayesian setting where the signal entries $\theta_1, \dots, \theta_n$ are drawn independently from a fixed probability measure $\pi$ on $\bbR$ with sub-Gaussian tails. Under this assumption,  the sequence of overlap parameters $\{(q_t, r_t)\}$ is random because it depends on the realization of $\theta$.  However, following the concentration arguments outlined in \cite{feng:2022}, it can be shown that these overlaps converge in the large-$n$ limit to a non-random limit described by the asymptotic SE parameters $\{(\sigma^2_t, \rho_t)\}$ defined by~\eqref{eq:se_singleletter_updates}. Note that this limit depends only on the signal distribution $\pi$ and the limiting empirical measure $\nu$ of the projector sequence. 

The practical upside of having an asymptotic SE that depends on the distribution $\pi$ (instead of $\theta$) is the that the the asymptotic overlap parameters $\{(\sigma^2_s,\rho_s)\}_{s <t}$ can be used to design the denoiser at time step $t$. In view of \eqref{eq:yts_model}, we see that each comparison vector $y_t$ can be seen as a collection of $n$ independent additive Gaussian noise channel observations of the signal entries $\theta_i$ given by 

\begin{align}
    y_{ti} = \lambda \rho_{\tau(t,i)} \theta_i + \sigma_{\tau(t,i)}z_i, \quad z_i \overset{\mathrm{iid}}{\sim} \normal(0,1).
\end{align}
For the prior $\theta \sim \pi^{\otimes n}$, the optimal estimator of $\theta$ under mean square error is the conditional mean estimator. This is  a separable function of the form given in \eqref{eq:ft_eta} with

\begin{align}
    \eta(y; v, w)  \coloneqq \frac{ \int u \exp\{ \frac{ w u y}{ v}   -  \frac{w^2 u^2}{ 2 v}   \} \, d \pi(u)   }{ \int \exp\{ \frac{w u y}{ v}   -  \frac{w^2 u^2}{ 2 v}   \} \,   d \pi(u)  }  \label{eq:cond_mean} 
\end{align}

From iterated expectations, it can be verified that $\psi(v,w) = \phi(v,w) = \Psi( w^2/v)$ for all $(v,w) \in \bbR_{+}\times \bbR$ where $\Psi \colon \bbR_+ \to \bbR_+$ is defined by 
\begin{align}
    \Psi( \gamma) \coloneqq \int \frac{\left(\int u \exp\{ \sqrt{\gamma} u y -  \frac{1}{2} \gamma u^2 \} \,   d \pi(u)\right)^2}{\int \exp\{ \sqrt{\gamma} u y -  \frac{1}{2} \gamma u^2 \} \, d \pi(u)} \frac{1}{\sqrt{2\pi}}e^{-\frac{1}{2}y^2} \, dy 
\end{align}
Consequently, the asymptotic overlap parameters satisfy $\rho_t = \sigma^2_t$ for all $t$. With this observation, we conclude that the asymptotic SE is described by the one-parameter recursion 
\begin{align} \label{eq:bayesSE}
\rho_t &=  \sum_{s<t} \Psi( \lambda^2 \rho_s) \,  p_{t}(s). 
\end{align}
Evaluating this recurrence relation in the special case where $p_t$ is a delta function at $t -1$ recovers the standard SE recursion for Bayes-optimal AMP with full matrix updates.

\subsection{Numerical Results} \label{sec:numerics}
To evaluate empirically our findings, we test different setups for the projection AMP recursion \eqref{eq:projAMP} acting on a rank-one spiked matrix model $M$ following \eqref{eq:spikedMatrix}. As denoiser sequences $\{f_t\}$, we apply both the sphere-projection function (corresponding to the OpAMP power method \eqref{eq:opampProjDenoiser}) and the separable conditional mean estimator from~\eqref{eq:cond_mean} corresponding to the Bayes optimal setting. For both choices of denoisers, we study the following protocols, each of which specifies how to select the erasure patterns $\delta_t$ (recall that a $0$ in the $i^{\text{th}}$ coordinate of $\delta_t$ corresponds to an erasure in the $i^{\text{th}}$ row of $M$ at the $t^{\text{th}}$ iteration), which in turn yield expressions for the limiting empirical measure $p_t(s)$ from~\eqref{eq:limitingmeasure}: 

\begin{itemize}
    \item \emph{Full-matrix updates.} At each iteration, we apply the entire data matrix. Thus, there are no erasures, corresponding to setting $\delta_t = \mathsf{1}$ (the all-ones vector) for all $t$ as well as
    \begin{align}
        p_t(s) = \begin{cases} 1 & \text{if } s = t-1,  \\ 0 & \text{if } s<t-1.\end{cases}
    \end{align}
    \item \emph{Random updates.} At each iteration, we sample the entries of $\delta_t$ IID from a $\mathsf{Bern}(\gamma)$-distribution, and apply the corresponding rows of the data matrix. This could correspond to the distributed computing setting where some straggling servers fail to respond by the deadline. We have that 
    \begin{align}
        p_t(s) = \begin{cases}
            \gamma (1-\gamma)^{t - s - 1} & \text{if } s=1,2, \dotsc, t-1, \\
            (1-\gamma)^{t - 1} & \text{if } s=0.
        \end{cases}
    \end{align}
    \item \emph{Round-robin updates.} We partition the set $\{1,\dotsc,n\}$ into $J$ equally-sized subsets $S_1,\ldots,S_J$. At the $t^{\text{th}}$ iteration, $\delta_t$ is one in entries $i \in S_{(t \mod{J})}$ and zero elsewhere. This could represent the setting where we schedule partial updates due to computational constraints. For the limiting measure, we distinguish two cases,
    \begin{subequations}
    \begin{align}
        &t < J : \quad p_t(s) = \begin{cases}
            1- \frac{t}{J} & \text{if } s = 0, \\
            \frac{1}{J} & \text{if } s = 1,2,\dotsc,t-1,
        \end{cases} \\
        &t \geq J : \quad p_t(s) = \begin{cases}
            0 & \text{if } s = 0, \dotsc, t-J-1, \\
            \frac{1}{J} & \text{if } s = t-J, \dotsc, t-1.
        \end{cases}
    \end{align}   
    \end{subequations}    
\end{itemize}

For all of our simulations, we set the dimension to $n = 5,000$, the signal strength to $\lambda = \sqrt{2}$, and average over 100 Monte Carlo trials, each with the same ground truth $\theta \in \{ \pm 1\}^n$ and initialization $f_0 \in \bbR^n$ with $\frac{1}{n} \langle \theta , f_0 \rangle = 0.01$. To equate the performance of the update protocols, we set $\gamma = 1/10$ and $J = 10$, corresponding to applying one-tenth of the data matrix at each iteration.

The performance of the sphere-projection denoiser was plotted earlier in Figure~\ref{fig:pi_intro_plots} as part of the introductory case study in Section~\ref{sec:casestudy}. (Note that the description in the introduction uses a different scaling in order to emphasize the connection with the power method. The parameter settings described in this section are also correct for Figure~\ref{fig:pi_intro_iter_plot}, due to the scale-invariant nature of the problem.) As one might expect, both the random update and the round-robin protocols evolve more slowly towards the common fixed point compared to the full-matrix protocol. It is also intuitive that round robin has an advantage compared to random update, as we are guaranteed that no coordinate lags more than $J$ iterations before an update. Similar trends can be seen for the Bayes-optimal denoiser in Figure~\ref{fig:bayes_iter}, and a careful comparison will reveal that it outperforms the suboptimal sphere-projection denoiser.

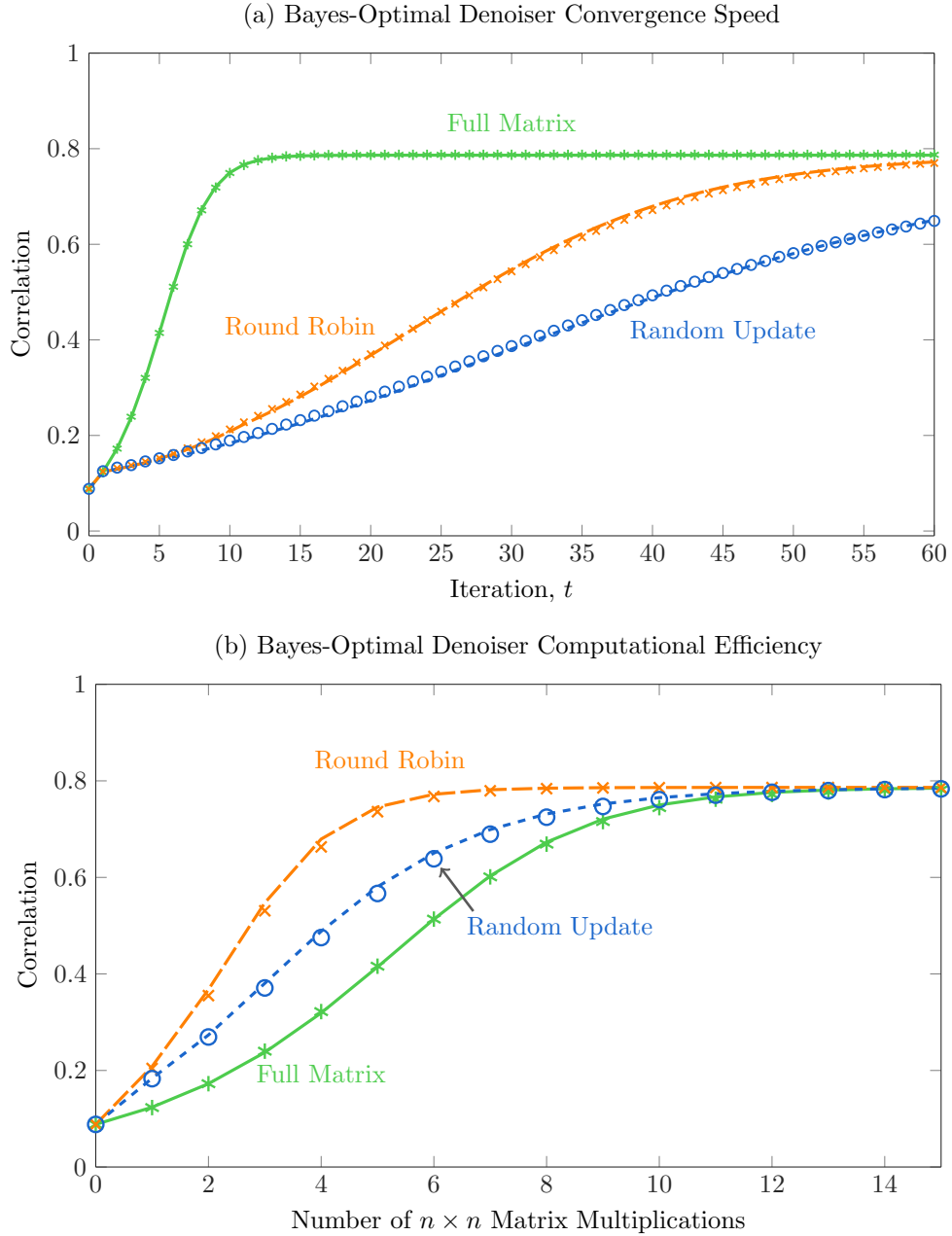
\begin{figure}[!ht]
\centering
\subfigure{
\label{fig:bayes_iter}
\def\datafilename{"fig_data/bayes_amp_iai.csv"} 

\begin{tikzpicture}
\begin{axis}[%
width=0.7\textwidth,
height=0.4\textwidth,
scale only axis,
separate axis lines,
every outer x axis line/.append style={white!15!black},
every x tick label/.append style={font=\color{white!15!black}},
xmin=1,
xmax=100,
xlabel={Iteration, $t$},
every outer y axis line/.append style={white!15!black},
every y tick label/.append style={font=\color{white!15!black}},
ymin=-0.01,
ymax=1,
ylabel={Correlation},
ylabel near ticks,
title = {(a) Bayes-Optimal Denoiser Convergence Speed},
legend style={at={(0.98,0.96)},anchor=south east,draw=white!15!black,fill=white,legend cell align=left}
]

\addplot [color=plot-green,solid, line width=1.4pt] table[col sep=comma, x="iter", y="se_full"] {\datafilename};

\addplot [datamarkstyle, color = plot-green, mark = asterisk]
            table [col sep = comma, x="iter",y="amp_full"] {\datafilename} node[pos=0.15, above = .1cm ] {Full Matrix};

\addplot [color=orange,dash pattern=on 8pt off 2pt, line width=1.4pt] table[col sep=comma, x="iter", y="se_robin"] {\datafilename};

\addplot [datamarkstyle, color = orange, mark = x]
            table [col sep = comma, x="iter",y="amp_robin"] {\datafilename} node[pos=0.15, above = .9cm ] {Round Robin};

\addplot [color=plot-darkblue,dashed,line width=1.4pt] table[col sep=comma, x="iter", y="se_rand"] {\datafilename}node[near end, below = .5cm ] {Random Update};

\addplot [datamarkstyle, color = plot-darkblue, mark = +]
            table [col sep = comma, x="iter",y="amp_rand"] {\datafilename}
            node[pos=0.4,below=.45cm] {Random Update};

\end{axis}
\end{tikzpicture}
\subfigure{
\label{fig:bayes_comp}
\def\datafilename{"fig_data/bayes_amp_iai.csv"} 

\begin{tikzpicture}
\begin{axis}[%
width=0.7\textwidth,
height=0.4\textwidth,
scale only axis,
separate axis lines,
every outer x axis line/.append style={white!15!black},
every x tick label/.append style={font=\color{white!15!black}},
xmin=1,
xmax=15,
xlabel={Number of $n \times n$ Matrix Multiplications},
every outer y axis line/.append style={white!15!black},
every y tick label/.append style={font=\color{white!15!black}},
ymin=0,
ymax=1,
ylabel={Correlation},
ylabel near ticks,
title = {(b) Bayes-Optimal Denoiser Computational Efficiency},
legend style={at={(0.98,0.03)},anchor=south east,draw=white!15!black,fill=white,legend cell align=left}
]

\addplot [color=orange,dash pattern=on 8pt off 2pt, line width=1.4pt] table[col sep=comma, x="iter_norm", y="se_robin"] {\datafilename} node[pos=0.5, above = .1cm ] {Round Robin};
;

\addplot [datamarkstyleB, color = orange, mark = x]
            table [col sep = comma, x="iter_norm",y="amp_robin"] {\datafilename};

\addplot [color=plot-darkblue,dashed,line width=1.4pt] table[col sep=comma, x="iter_norm", y="se_rand"] {\datafilename} node[pos=0.37, above = .03cm, rotate=33 ] {Random Update};

\addplot [datamarkstyleB, color = plot-darkblue, mark = +]
            table [col sep = comma, x="iter_norm",y="amp_rand"] {\datafilename};

\addplot [color=plot-green,solid,line width=1.4pt] table[col sep=comma, x="iter", y="se_full"] {\datafilename};

\addplot [datamarkstyleB, color = plot-green, mark = asterisk]
            table [col sep = comma, x="iter",y="amp_full"] {\datafilename} node[pos=0.04, below =.1cm, rotate=33 ] {Full Matrix};

\end{axis}
\end{tikzpicture}
\caption{Correlation $\frac{1}{n} \langle \theta, \hat{\theta}_t \rangle$ attained after $t$ iterations for the full matrix, round robin (updating $\frac{n}{10}$ rows per iteration), and random update (updating a row with probability $\frac{1}{10}$) protocols. The denoising functions $f_t$ are the separable, Bayes-optimal denoisers described in \eqref{eq:cond_mean}. All three protocols converge to the same fixed point and, as expected, full matrix updates requires fewer iterations to converge. Markers denote the average value over $100$ trials, standard errors were omitted as they were of negligible size. Solid and dashed curves denote the theoretically-predicted asymptotic overlap $\rho_t$ evaluated from~\eqref{eq:bayesSE} using the state evolution. }
\label{fig:bayes_plots}
\end{figure}

The surprising aspect of our numerical results is that, when evaluated in terms of their computational complexity, the partial-update protocols are more efficient than the full-update protocol. Specifically, we use the number of effective $n \times n$ matrix multiplications as a simple proxy for the required computation. (Of course, this ignores both the cost of the denoising function as well as the communication costs in a distributed setting, but may serve as a reasonable estimate when $n$ is large compared to total iterations.) For the full-matrix update, the number of matrix multiplications is equal to the iteration count $t$. For random updates, the expected number of matrix multiplications is $\gamma t$, and, for round-robin updates, the number of matrix multiplications is $t/J$. As observed in Figure~\ref{fig:pi_intro_comp_plot} for the sphere-projection denoiser and Figure~\ref{fig:bayes_comp} for the Bayes-optimal denoiser, the round-robin protocol is the most efficient with respect to the number of required matrix multiplications to reach a specified correlation. In many cases, we observe empirically that the random-update protocol outperforms the full-matrix update, but this is not always the case. As before, we note that the Bayes-optimal denoiser outperforms the sphere-projection denoiser when both follow the same update schedule.

\clearpage
\bibliographystyle{unsrt}
\bibliography{lib}

\appendix

\section{Full-Memory AMP}
\label{sec:full_memory}
In this section, we state and prove a convergence result for the full memory AMP recursion of the form
\begin{align}
    x_t = Z f_t(x_{< t} )- \sum_{s < t} b_{ts} f_s( x_{<s}) 
    \label{eq:AMPfullmemory}
\end{align}
This is the special case of the OpAMP recursion corresponding to $\cL_t(Z) = Z$ for all $t$. 
The sequence $\{x_t\}$ is compared with a zero-mean Gaussian process $\{y_t\}$, whose covariance is defined recursively by the SE update equation
\begin{align}
   \cov( y_s,y_t)  = \frac{1}{n}\ex{\langle f_r(y_{<r}), f_s(y_{<s}) \rangle}  \Id_n \ , \quad 0 \le s \le t  \label{eq:SEcovFullmemory}
\end{align}
Here, we recall that $\cov(y_0) = \frac{1}{n} \|f_0\|^2 \Id_n$ where $f_0\in \bbR^n$ is the deterministic initialization. 

\begin{theorem}\label{thm:AMPfullmemory}
Let $\{x_t\}$ be generated by \eqref{eq:AMPfullmemory} and let $\{y_t\}$ be the zero-mean Gaussian process defined by the SE. Suppose Assumption~\ref{ass:ft} holds,  $Z \sim \mathsf{GOE}(n)$, and 
the coefficients $\{b_{ts} : 0 \le s < t \}$ are given by
\begin{align}
        b_{ts} = \frac{1}{n} \gtr( \ex{  \mathsf{D}_s f_t(y_{<t}) } )  \label{eq:bts}
\end{align}
Then, for any fixed number of iterations $T$, there exists  a sequence (in $n$) of couplings between $x_{\scriptscriptstyle{\le T}}$ and $y_{\scriptscriptstyle{\le T}}$ such that
\begin{align}
    \frac{\|x_{\scriptscriptstyle{\le T}} - y_{\scriptscriptstyle{\le T}} \|}{\sqrt{n}} \xrightarrow[n \to \infty]{\mathrm{p}} 0
\end{align}
\end{theorem}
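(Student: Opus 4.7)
The plan is to proceed by strong induction on $T$, simultaneously constructing a coupling of the iterates $\{x_t\}_{t \le T}$ with the Gaussian process $\{y_t\}_{t \le T}$ on the same probability space as $Z$ via the Bolthausen conditioning technique. For the base case $t=0$, the iterate $x_0 = Z f_0$ is Gaussian with covariance $\frac{1}{n}\|f_0\|^2 \Id_n + \frac{1}{n} f_0 f_0^\top$; the rank-one perturbation has operator norm of order $1/n$ relative to the isotropic part, so there is a synchronous coupling with $y_0 \sim \normal(0, \frac{1}{n}\|f_0\|^2 \Id_n)$ such that $\|x_0 - y_0\|^2 = O_P(1)$, hence $o_P(n)$.

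\textbf{Inductive step.} Suppose the coupling bound has been established through time $t-1$. Set $F_{t-1} \coloneqq [f_0, f_1(x_{<1}), \dots, f_{t-1}(x_{<t-1})] \in \bbR^{n \times t}$ and let $P$ denote the orthogonal projection onto $\mathrm{col}(F_{t-1})$. Conditioning on $\cG_{t-1} \coloneqq \sigma(\{Z f_s(x_{<s}) : s < t\})$ and exploiting the symmetry of $Z$, I would write
\begin{align*}
Z f_t(x_{<t}) \;=\; \underbrace{\bigl[P Z + P^\perp Z P\bigr] f_t(x_{<t})}_{\cG_{t-1}\text{-measurable}} \;+\; \underbrace{P^\perp Z P^\perp f_t(x_{<t})}_{\text{fresh Gaussian}},
\end{align*}
where the first bracket is fully determined by $Z F_{t-1}$, $F_{t-1}^\top f_t(x_{<t})$, and $(F_{t-1}^\top F_{t-1})^{-1}$, while the second is, conditional on $\cG_{t-1}$, a centered Gaussian vector independent of the past. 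Unrolling the previous recursions $Z f_s(x_{<s}) = x_s + \sum_{r<s} b_{sr} f_r(x_{<r})$ rewrites the deterministic bracket as a linear combination of $f_0,\dots,f_{t-1}$ and $x_0,\dots,x_{t-1}$ with scalar coefficients that are empirical inner products involving $F_{t-1}$ and $f_t(x_{<t})$. Under the inductive coupling and Lipschitz continuity of $f_t$, these inner products concentrate, and a Gaussian integration-by-parts calculation (Stein's lemma for the GOE structure) identifies the limiting coefficient of $f_s(x_{<s})$ as exactly $\frac{1}{n}\gtr(\E[\mathsf{D}_s f_t(y_{<t})]) = b_{ts}$, while the coefficients on the $x_s$'s cancel recursively against the past Onsager terms. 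This establishes $\|x_t - P^\perp Z P^\perp f_t(x_{<t})\|/\sqrt{n} \xrightarrow{\mathrm{p}} 0$.

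\textbf{Coupling construction and main obstacle.} The fresh component $P^\perp Z P^\perp f_t(x_{<t})$ is, conditionally on $\cG_{t-1}$, Gaussian with covariance $\frac{1}{n}\|P^\perp f_t(x_{<t})\|^2 P^\perp$, up to a rank-$O(1)$ diagonal adjustment from the GOE variance profile. Lipschitz concentration gives $\frac{1}{n}\|f_t(x_{<t})\|^2 \to \frac{1}{n}\E[\|f_t(y_{<t})\|^2]$ and analogous convergence for the cross overlaps with columns of $F_{t-1}$, so one can define $y_t$ by rescaling this fresh Gaussian to match the exact SE covariance in~\eqref{eq:SEcovFullmemory} and augmenting with an independent Gaussian supported on $\mathrm{col}(F_{t-1})$; the augmentation has dimension $t=O(1)$ and so contributes only $O(1)$ to $\|x_t - y_t\|^2$. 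The principal technical obstacle is the non-separability of $f_t$ under only Lipschitz regularity: the Jacobian $\mathsf{D}_s f_t$ exists merely almost everywhere, so the Stein-type identity used to identify $b_{ts}$ must be justified through a mollification argument (convolve $f_t$ with a Gaussian kernel, establish the identification for the smooth approximations, then pass to the limit using Lipschitz bounds and the Gaussian Poincar\'e inequality to control variances of traces of Jacobians), closely following the non-separable AMP machinery of~\cite{berthier:2020}. The final passage from the resulting pseudo-Lipschitz test-function convergence to the stated coupling bound is obtained via Theorem~\ref{thm:XnYnGausslim}.
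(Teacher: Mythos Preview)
Your proposal is a valid strategy but takes a genuinely different route from the paper. The paper's proof is a \emph{reduction}: it embeds the full-memory recursion \eqref{eq:AMPfullmemory} into the matrix-valued, one-step-memory AMP of \cite{gerbelot:2023} by stacking the iterates $X_t=[x_0,\dots,x_t,0,\dots,0]$ and the functions $F_t(X_{t-1})=[f_0,\dots,f_t(x_{<t}),0,\dots,0]$, verifies that the strictly lower-triangular $B_t$ in that framework reproduces the scalars $b_{ts}$, and then invokes \cite[Theorem~2]{gerbelot:2023} as a black box. A short compactness/subsequence argument removes the need for the SE covariance to converge (only boundedness is retained), and Theorem~\ref{thm:XnYnGausslim} converts the resulting pseudo-Lipschitz convergence into the coupling statement. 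In contrast, you propose to rederive everything from scratch via Bolthausen conditioning and Stein identities, essentially reproving the relevant part of \cite{berthier:2020,gerbelot:2023} directly in the full-memory setting. Your approach is more self-contained and makes the role of the Onsager term explicit; the paper's is far shorter because it outsources the hard analysis. One small point: your phrase ``the coefficients on the $x_s$'s cancel recursively against the past Onsager terms'' is not quite the right mechanism---in the conditioning argument those coefficients are themselves $o_P(1)$ by the inductive hypothesis and concentration, rather than cancelling against anything. Both proofs terminate with the same appeal to Theorem~\ref{thm:XnYnGausslim}.
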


We remark that related convergence results for full-memory AMP have appeared previously in the literature. For example, the arguments outlined in \cite[Appendix~A]{Montanari:2022} can be used to establish convergence, over any fixed number of iterations,  with respect to a fixed sequence of pseudo-Lipschitz test functions. By the results in Appendix~\ref{sec:modes_convergence}, this mode of convergence is formally equivalent to the one given in Theorem~\ref{thm:AMPfullmemory}.
Our proof of Theorem~\ref{thm:AMPfullmemory} builds on the direct connection between the full-memory recursion in in \eqref{eq:AMPfullmemory} and the matrix-valued recursion studied in \cite{gerbelot:2023}.

\subsection{Prior Results for Matrix-valued Iterates}
\label{sec:matrix_valued_amp}
The SE for AMP with non-separable functions was proved by Berthier et al.~\cite{berthier:2020}, and then later extended to the setting of matrix-valued iterates by Gerbelot and Berthier~\cite{gerbelot:2023}. This section summarizes  \cite[Theorem~2]{gerbelot:2023} using the notation of this paper.
For $Z \sim \mathsf{GOE}(n)$, the matrix-valued AMP recursion  of interest given by
\begin{align}
 X_t = Z F_t(X_{t-1} )   - F_{t}(X_{t-1})B_t^\top  \label{eq:matrixAMP}
\end{align}
where $X_t \in \bbR^{n \times d}$ is the iteration at time $t$, each $F_t \colon \bbR^{n \times d} \to \bbR^{n \times d}$ is a deterministic function, and each $B_{t} \in \bbR^{d \times d}$ is a deterministic matrix. 
The assumptions on $\{F_t\}$ can be summarized as follows:
\begin{enumerate}
    \item Each $F_t \colon \bbR^{n \times d} \to \bbR^{n \times d}$ satisfies
    \begin{align}
    \|F_t(X) - F_t(Y)\|_F \le L \|X- Y\|_F \left( 1 + \left( \frac{\|X\|_F}{\sqrt{n}}\right)^{r-1}  + \left( \frac{\|Y\|_F}{\sqrt{n}}\right)^{r-1}  \right)   
\end{align}
for all $X, Y \in \bbR^{n \times d}$ where $L \ge 0$ and $r \ge 1$ are finite real numbers that do not depend on $n$. 

\item For all $s,t \in \bbN_0$ and $\Omega \in \psd^{2d}$ the following limit exists and is finite
\begin{align}
    \lim_{n \to \infty} \frac{ 1}{n} \ex{ F_s(Y_{s})^\top F_t(Y_{t})}
\end{align}
where $(Y_s , Y_t) \sim \normal( 0 , \Omega \otimes \Id_n)$.
\end{enumerate}
Under these assumptions, the asymptotic SE covariance $(\Sigma_{st} \in \psd^d : s,t \in \bbN_0) $ is defined  using the recursive construction
\begin{align}
    \Sigma_{00} & =
    \lim_{n \to \infty} \frac{ 1}{n}  F_0(\emptyset)^\top F_0(\emptyset); \qquad 
    \Sigma_{st}  =
    \lim_{n \to \infty} \frac{ 1}{n}  \ex{ F_s(Y_{s-1} )^\top F_t(Y_{t-1} )}, \qquad 0 \le s \le t
\end{align}
where $Y_{0}, \dots, Y_{t-1} \sim   \normal(0 ,\Sigma_{<t} \otimes \Id)$ and  $\Sigma_{<t}$ is the $dt \times dt$ psd block matrix defined by $(\Sigma_{rs} : 0 \le r,s < t) $. The matrices  $\{B_{t}\}$ appearing in the correction term in \eqref{eq:matrixAMP} are defined as a function of the asymptotic SE covariance according to  
\begin{align}
(B_t)_{kl}  =\frac{1}{n} \gtr\left(\ex{ \mathsf{D}_{l\to k} F_t(Y_{t-1})  }\right) , \qquad 1\le k, l \le d  \label{eq:BtGB23}
\end{align}
where $Y_{0}, \dots, Y_{t-1} \sim   \normal(0 ,\Sigma_{<t} \otimes \Id)$ and  $\mathsf{D}_{l\to k}$ is the Jacobian matrix of the $k$-th column of the output with respect to the $l$-th column of the input. Under these assumptions, \cite[Theorem~2]{gerbelot:2023} asserts that, for any  $T \in \bbN$, $r \ge 1$, and sequence of functions $\{\psi_n : (\bbR^{n \times d})^{(T+1)} \to \bbR\}$ satisfying 
\begin{align}
 \left| \psi_n(x)- \psi_n(y) \right| \le   \frac{ \|x- y\|}{\sqrt{n}} \left( 1 + \left( \frac{ \|x\|}{\sqrt{n}}\right)^{r-1} + \left( \frac{ \|y\|}{\sqrt{n}}\right)^{r-1} \right)  \qquad \forall x,y \in (\bbR^d)^{(T+1)}
\end{align}
the following convergence holds: 
\begin{align}
 \big| \psi_n(X_{\le T} ) - \ex{  \psi_n( Y_{\le T}) }\big|  \xrightarrow[n \to \infty]{\mathrm{p} } 0 \label{eq:convegenceGB23}
\end{align}

\subsection{Proof of Theorem~\ref{thm:AMPfullmemory}}
\label{proof:thm:AMPfullmemory}
We begin by establishing a link between the full memory AMP recursion \eqref{eq:AMPfullmemory} and matrix-valued recursion \eqref{eq:matrixAMP}.  
Under the assumptions of \cite[Thoerem~2]{gerbelot:2023}, this implies approximate Gaussianity of the AMP iterates in the sense of \eqref{eq:convegenceGB23}. We then show that the assumptions used in \cite[Theorem~2]{gerbelot:2023} can be relaxed such that the existence of a well-defined limit of the SE is replaced by a boundedness constraint, and the Gaussian process $\{y_t\}$  in \eqref{eq:convegenceGB23} is defined with respect to the finite-$n$ SE instead of the asymptotic SE. The final step in the proof follows from Theorem~\ref{thm:XnYnGausslim}, which shows that \eqref{eq:convegenceGB23} in implies that there exists a sequence of couplings such that $\|x_{\scriptscriptstyle{\le T}} - y_{\scriptscriptstyle{\le T}}\|/\sqrt{n}$ converges to zero in probability for any fixed $T \ge 0$.   
\subsubsection{From matrix-valued AMP to full memory AMP}
For a fixed number of iterations $T \in \bbN_0$, the full memory AMP recursion in \eqref{eq:AMPfullmemory} with iterates $x_t \in \bbR^n$ and functions $f_t \colon \bbR^{n \times t} \to \bbR^n$ can be embedded into the matrix-valued version of \eqref{eq:matrixAMP} with iterates $X_t \in \bbR^{n \times T}$ and $F_t \colon \bbR^{n \times T} \to \bbR^{n \times T}$ using 
    \begin{align}
X_t &= \begin{bmatrix}
    x_0 & x_1 & \hdots & x_{t} & 0 & \hdots & 0
\end{bmatrix} \\
F_t(X_{t-1}) &= \begin{bmatrix}
    f_0(\emptyset) & f_1(x_0) & \hdots & f_{t} (x_{<t}) & 0 & \hdots 0 
\end{bmatrix} 
\end{align}
This implies that the AMP iterations \eqref{eq:AMPfullmemory} up to time $T$ can be represented as the matrix equation
\begin{align}
    X_{t} = Z F_{t}(X_{t-1}) - F_{t-1}(X_{t-2}) B_{t}^\top 
\end{align}
for each $B_{t} \in \bbR^{T \times T}$, $t \leq T$, a strictly lower triangular matrix with entries
\begin{align}
(B_{t})_{sr}  = 
\begin{dcases}
    \frac{1}{n} \ex{ \trb{\mathsf{D}_{r} f_s(y_{<s})}  } &  \text{if $r< s\leq t$} \\
    0  & \text{otherwise} 
\end{dcases}
\end{align}

Here, we see that $X_{t-1} \equiv x_{< t} = \begin{bmatrix} x_0 \; \cdots \; x_{t-1} \end{bmatrix}$ contains the entire history of the recursion up to time $t-1$ and the $t$-th column of $F_{t}$ is given by $f_t$, in turn defining $x_{\leq t }$. For this special structure, the matrices $B_{t} \in \bbR^{T \times T}$ defined above satisfy the definition of the correction term \eqref{eq:BtGB23}, where we used $\mathsf{D}_r f_s$ to denote the Jacobian matrix of $f_s$ computed with respect to the $r$-th column of its input. Letting $b_{ts} = (B_t)_{ts}$ for $s < t$ recovers the form of the scalars $\{b_{ts}\}$ in \eqref{eq:bts} and completes the reduction.

\subsubsection{Convergence along any subsequence}
The prior work on AMP with non-separable functions \cite{berthier:2020, gerbelot:2023} places assumptions on the $f_t$ ensuring that the SE covariance has a well-defined limit. Specifically, for every $\Omega \in \psd^T$ and $0 \le s , t \le T$, the following limit exists and is finite
\begin{align}
    \lim_{n \rightarrow \infty} \frac{1}{n} \ex{\langle f_s(y_{<s}) , f_t(y_{<t})\rangle}
\end{align}
where the expectation is taken with respect to $y_{<T} \sim  \normal(0, \Omega \otimes \Id_n)$

In this section, we argue that these assumptions can be relaxed to require only that the sequence of (finite $n$) SE covariance matrices is uniformly bounded. Approximate Gaussianity of the AMP recursion is then established by showing that the normalized difference between the iterates $\{x_t\}$ and the Gaussian approximation $\{y_t\}$ converges to zero in the large-$n$ limit. 

This generalization is possible is because of the assumed Lipschitz continuity of each $f_t$ function, which ensures that, if $y_{< T} \sim \normal( 0, \Omega \otimes \Id_n)$ for $\Omega \in \psd^{T}$, then the element-wise overlap
\begin{align}
    \frac{1}{n} \ex{ \langle f_s(y_{<s} ),  f_t(y_{<t} ) \rangle}
\end{align}
is continuous function of $\Omega$. It follows that, for any given $T$, the sequence of overlaps $\Sigma_{\leq T}$ is defined on a compact subset of the cone positive semidefinite matrices. If the overlaps are bounded uniformly in $n$, it is then possible to partition any sequence of problems into a finite number of subsequences, each of which is eventually contained within an $\eps$-ball centered about a fixed quantization point. By continuity, closeness of SE covariance implies closeness of the expectations under the class of pseudo-Lipschitz functions. Hence, we can apply the result for converging sequences in \cite[Theorem~2]{gerbelot:2023} to conclude that 
\begin{align}
 \big| \psi_n( x_{\leq T}) - \ex{  \psi_n(y_{\leq T}) }\big|  \xrightarrow[n \to \infty]{\mathrm{p} } 0 
\end{align}
for any sequence of uniformly pseudo-Lipschitz functions where $y_{\leq T} \sim \normal( 0, \Sigma_{\le T} \otimes \Id)$ is defined with respect to the finite-$n$ SE covariance. Finally, we appeal to the equivalence result in Theorem~\ref{thm:XnYnGausslim} to conclude that
\begin{align}
    \frac{\| x_{\scriptscriptstyle{\leq T}} - y_{\scriptscriptstyle{\leq T}} \|}{\sqrt{n}} \xrightarrow[n \rightarrow \infty]{\mathrm{p}} 0 
\end{align}
This concludes the proof Theorem~\ref{thm:AMPfullmemory}.
\qed

\section{Proofs of Main Results} 
\subsection{Proof of Theorem~\ref{thm:opAMP}}
The high-level idea of this proof is express  the OpAMP recursion $\{x_t\}$ in terms of a ``lifted recursion'' $\{w_{tk}\}$, indexed by $t \in \bbN$ and $k =1, \dots, K$, according to 
\begin{align}
x_t = \sum_{k=1}^K L_{tk} w_{tk}   \label{eq:wtox}
\end{align}
where the $n \times n$ matrices  $L_{tk}$ are defined by the decomposition of $\cL_t$. We then show that the lifted recursion is a doubly-indexed version of the basic full memory AMP recursion in \eqref{eq:basicAMP}.  
Making appropriate adjustments for the indexing, we apply  Theorem~\ref{thm:AMPfullmemory} to obtain a SE and  Gaussian approximation for  $\{w_{tk}\}$. The desired result for the OpAMP recursion then follows straightforwardly from the linearity of the mapping from  $\{w_{tk}\}$ to $\{x_t\}$ given in \eqref{eq:wtox}.  

\subsubsection{Lifted recursion}
We begin by introducing the recursion $\{w_{tk} \in\bbR^n\}$ indexed by the pair $(t,k)$ with $t \in \bbN$ and $k = 1,\ldots,K$. This recursion is defined by  a sequence functions $\{g_{tk}\}$ and a sequence of scalar debiasing terms $\{c_{tksl}\}$ according to parallel updates of the form 
\begin{align}
w_{tk} &= Z g_{tk}(w_{< t}) -  \sum_{s < t} \sum_{l=1}^K c_{tksl} \,  g_{sl}(w_{< s}) \ , \quad \text{for $k=1,\dotsc,K$} \label{eq:wtk}
\end{align}
where we use the shorthand notation $w_{< t} = (w_{sl} : s < t, 1 \le l \le K)$ to represent the entire history of the $\{w_{tk}\}$ sequence up to time step $(t-1)$. Each function $g_{tk}$ is defined in terms the function $f_{t}$, the matrix $R_{tk}$,  and the collection of matrices $L_{<t} = ( L_{sl} : s< t, 1 \le l \le K)$ according to 
\begin{align}
    g_{tk}(w_{<t} ) & = R_{tk}f_t\Big(\sum_{k=1}^K L_{0k} w_{0k}, \ldots, \sum_{k=1}^K L_{t-1,k} w_{t-1, k}  \Big) \label{eq:gtk}
\end{align}

Next, we observe that the recursion defined by \eqref{eq:wtk} is a doubly-indexed version of the full-memory AMP recursion in \eqref{eq:AMPfullmemory}. To make this connection explicit, one may consider the bijective mapping between the index pair $(t,k)$ and a single iteration index $\tau \in \bbN_0 $ given by $\tau = tK +k$. 
Under the assumptions of Theorem~\ref{thm:opAMP}, it it not too difficult to see that the collection of functions $\{g_{tk}\}$ satisfies Assumption~\ref{ass:ft}, i.e., each $g_{tk}$ has a Lipschitz constant that is bounded uniformly for all $n$. Consequently, we can apply Theorem~\ref{thm:AMPfullmemory} to establish the existence of a coupling between $\{w_{tk}\}$ and a zero-mean Gaussian process  $\{u_{tk}\}$ whose covariance is defined by SE equations. 

In order to describe the SE for the the lifted recursion, we follow the specification given in Theorem~\ref{thm:AMPfullmemory}, making appropriate adjustments to handle the double indexing. Specifically, we define $\{u_{tk}\}$ be a zero-mean Gaussian process whose covariance is defined recursively by the SE update equation 
\begin{align}
    \cov(u_{sl}, u_{tk}) = \frac{1}{n} \ex{ \langle g_{sl}(u_{<s}), g_{tk}(u_{<t})} \,  \Id_n , \quad 0 \le s \le t, \quad k,l  =1, \dots , K
\end{align}
The debiasing terms $\{c_{tksl} : 0 \le s < t, k,l = 1,\dots, K\}$ are given by
\begin{align}
        c_{tksl} = \frac{1}{n} \gtr( \ex{  \mathsf{D}_{sl} g_{tk}(u_{<t}) } )  \label{eq:ctksl}
\end{align}
where $\mathsf{D}_{sl}$ denotes the Jacobian of $g_{tk}$ computed with respect to the input vector indexed by $(s,l)$.

\subsubsection{Mapping back to OpAMP}
The OpAMP recursion in \eqref{eq:opAMP} can be recovered from the lifted recursion in \eqref{eq:wtk} by applying the transformation in \eqref{eq:wtox}. To see this, we first make the the substitution $g_{sl}(w_{<s} ) = R_{sl}f_s(x_{s-1})$ for $s \le t$ and $l = 1, \dots , K$,  which leads to 
\begin{align}
w_{tk} &= Z R_{tk}f_t(x_{t-1}) -  \sum_{s < t} \sum_{l=1}^K c_{tksl} \, R_{sl} f_s(x_{s-1}) \ , \quad \text{for $k=1,\dotsc,K$} 
\end{align}
Applying \eqref{eq:wtox}  and recalling that $\cL_t(Z) = \sum_{t=1}^K L_{tk}Z R_{tk}$ then leads to the OpAMP recursion in \eqref{eq:opAMP}, where the  debiasing matrices are given by
\begin{align}
    B_{ts} = \sum_{k,l= 1}^K c_{tksl} L_{tk} R_{sl}  \label{eq:c_to_B}
\end{align}

The SE for OpAMP follows from applying the same transformation to SE of lifted recursion. Specifically, let $\{u_{tk}\}$ be the zero-mean Gaussian process described above and let $\{y_t\}$ be the zero-mean Gaussian process defined by $y_t = \sum_{k=1}^K L_{tk} u_{tk}$. The covariance satisfies 
\begin{align}
    \cov( y_s, y_t) = \sum_{k,l=1}^L L_{sl} \cov( u_{sl}, u_{tk}) L_{tk}^\top \label{eq:covutocovy}
\end{align}
Making the substitution $g_{sl}(u_{<t}) = R_{sl} f_s(y_{s-1})$ leads to an expression that depends only the previous terms in the $\{y_t\}$ sequence:
\begin{align}
    \cov( u_{sl}, u_{tk}) & = \underbrace{\frac{1}{n} \ex{ \langle R_{sl} f_s(y_{s-1}), R_{tk} f_t(y_{t-1})}}_{\eqqcolon  q_{sltk}} \,  \Id_n
\end{align}
Plugging this expression back into \eqref{eq:covutocovy} recovers the SE covariance update in \eqref{eq:SEopAMP}.

Finally, we derive an expression for the debiasing coefficients $\{c_{tksl}\}$ in terms of $\{y_t\}$. By \eqref{eq:gtk} and the chain rule for differentiation, we see that
\begin{align}
    \frac{\partial}{ \partial u_{sl}}  g_{tk}(u_{<t}) 
    &  =    \frac{\partial}{ \partial u_{sl}}R_{tk}  f_t\Big(\sum_{k=1}^K L_{0k} u_{0k}, \ldots, \sum_{k=1}^K L_{t-1,k} u_{t-1, k}  \Big)    =  R_{tk}  \frac{\partial}{ \partial y_{s}} f_t(y_0, \dots, y_{s-1} ) L_{sl}  
\end{align}
In conjunction with \eqref{eq:ctksl}, we obtain
\begin{align}
    c_{tksl}  & = \frac{1}{n} \gtr\left( R_{tk} \ex{\frac{ \partial}{\partial y_{s} } f_t(y_0,\ldots,y_{t-1})} L_{sl}   \right) 
\end{align}
Plugging this expression back into \eqref{eq:c_to_B} recovers the correction term given in Theorem~\ref{thm:opAMP}. 
This concludes the proof of Theorem~\ref{thm:opAMP}.
\qed

\subsection{Proof of Theorem~\ref{thm:opAMPlinmemory}}
The first step of this proof is to use a change of variable to express the autoregressive iteration
\begin{align}
    x_{t} = \mathcal{L}_t(Z)f_t(x_{t-1}) + \sum_{s<t} A_{ts} x_s - \sum_{s<t} B_{ts}f_s(x_{s-1}) \label{eq:amp_aut2}
\end{align}
as a special case of the the OpAMP recursion in \eqref{eq:opAMP}  To this end, we introduce the modified sequence  $\{\tilde{x}_t\}$ according to 
\begin{align}
    \tilde{x}_t & = x_t - \sum_{s < t} A_{ts} x_s  \label{eq:tildexttoxt}
\end{align}
The transformation from $\{x_t\}$ to $\{\tilde{x}_t\}$ is invertible with inverse mapping given by 
\begin{align}
       x_t & =  \sum_{s \le t} C_{ts} \tilde{x}_s  \label{eq:xttotildext}
\end{align}
where  the collection of $n \times n$ matrices $\{C_{ts}\}$ is defined by \eqref{eq:AtoC} and we have used the convention $C_{tt} = \Id_n$ to simplify the presentation.  Applying this change of variable to \eqref{eq:amp_aut2}, one finds that the $\{\tilde{x}_t\}$ process can be expressed in terms of the OpAMP recursion: 
\begin{align}
    \tilde{x}_t 
    & = \tilde{f}_t( \tilde{x}_{< t} ) - \sum_{s < t} B_{ts} \tilde{f}_{s}( \tilde{x}_{< s})\label{eq:tildext}
\end{align}
where each $\tilde{f}_t \colon \bbR^{n \times T} \to \bbR^n$ is given by 
\begin{align}
\tilde{f}_t(\tilde{x}_{<t}) \coloneqq  f_t\Big(\sum_{s< t} C_{t-1,s} \tilde{x}_s \Big)
\end{align}
The next step is to verify the recursive construction for the SE given in \eqref{eq:SEopAMPlinmemory}. In the following, we use $\{y_t\}$ and $\{\tilde{y}_t\}$ to denote  the zero-mean Gaussian processes associated with the recursions $\{x_t\}$ and $\{\tilde{x}_t\}$, respectively.  From the bijection between $\{x_t\}$ and $\{\tilde{x}_t\}$ defined by \eqref{eq:tildexttoxt} and \eqref{eq:xttotildext}, these processes satisfy  
\begin{align}
       y_t & = \sum_{t \le t} C_{ts} \tilde{y}_s \quad \iff \quad \tilde{y}_t  = y_t - \sum_{s < t} A_{ts} y_s \label{eq:tildeyttoyt}
\end{align}
The covariance of $\{\tilde{y}\}$ is specified according to the recursive update equation of OpAMP given in \eqref{eq:SEopAMP}, i.e.,
\begin{align}
\cov(\tilde{y}_s, \tilde{y}_t)  & = \sum_{l,k=1}^K  q_{sltk}  L_{sl} L_{tk}^\top,\\
q_{sltk} & =  \frac{1}{n} \ex{\langle R_{sl} \tilde{f}_{s}(\tilde{y}_{<s}  ), R_{tk} \tilde{f}_{t}(\tilde{y}_{<t}) \rangle } 
\end{align}
By the correspondence $\tilde{f}_{t}(\tilde{y}_{<t}) = f_t(y_{t-1})$, the overlap terms can also be expressed in as expectation with respect to the $\{y_t\}$ processes:
\begin{align}
      q_{sltk}
      & = \frac{1}{n} \ex{\langle R_{sl} f_{s}(y_{s-1} ), R_{tk} f_{t}(y_{t-1}) \rangle }.  
\end{align}
Consequently,  the update equation for the covariance of $\{y_t\}$ given in \eqref{eq:SEopAMPlinmemory} is obtain by writing
\begin{align}
    \cov(y_s, y_t)
    &= \sum_{ s'\le s, t' \le t} C_{ss'} \cov\left(  \tilde{y}_{s'},  \tilde{y}_{t'}  \right)C_{tt'}^\top \\
       &= \sum_{s' \leq s, t' \leq t} \; C_{ss'} \Big(\sum_{l,k=1}^K q_{s'lt'k} L_{sl}L_{tk}^\top\Big) C_{tt'}^\top \\
    &= \sum_{s' \leq s, t' \leq t} \; \sum_{l,k=1}^K q_{s'lt'k} C_{ss'} L_{sl} (C_{tt'}L_{tk})^\top
\end{align}
Finally, we derive the form of the correction term given in \eqref{eq:BtsopAMPlinmemory}.  Following the specification for OpAMP in \eqref{eq:BtsopAMP}, the matrices $\{ B_{ts} : 0 \le s <t \}$ are given in terms of the $\{\tilde{y}_t\}$ process by
\begin{align}
B_{ts} & = \sum_{k,l=1}^K   \frac{1}{n} \gtr\left( R_{tk} \ex{\mathsf{D}_{s} \tilde{f}_t(\tilde{y}_{<t})} L_{sl} \right) L_{tk} R_{sl} \label{eq:Bts_aut2}
\end{align}
To express the correction in terms of the $\{y_t\}$ process, we use the definition of $\tilde{f}_t$ and the chain rule for differentiation to write
\begin{align}
\mathsf{D}_{s}\tilde{f}_t(\tilde{y}_{<t}) 
& = \frac{ \partial}{\partial \tilde{y}_s} f_t\Big(\sum_{t'< t} C_{t-1,t'} \tilde{y}_{t'} \Big)  = \mathsf{D} f_t\Big(\sum_{t'< t} C_{t-1,t'} \tilde{y}_{t'} \Big) C_{t-1,s}  = \mathsf{D} f_t(y_{t-1} ) C_{t-1,s} 
\end{align}
where the last step follows from \eqref{eq:tildeyttoyt} Plugging this expression back into \eqref{eq:Bts_aut2} leads to \eqref{eq:BtsopAMPlinmemory}. This concludes the proof of Theorem~\ref{thm:opAMPlinmemory}.
\qed

\subsection{Proof of Theorem~\ref{thm:projAMP}}
This result follows from noting that projection AMP is a special case of the autoregressive OpAMP recursion in \eqref{eq:opAMPlinmemory}, with $\cL_t(Z) = \Pi_{t} Z$ and linear memory terms given by
\begin{align}
    A_{ts} & = \begin{dcases}
        \Pi_{t-1} & s=t-1 \\
        0 & s < t-1
    \end{dcases}
\end{align}
The simplified structure of $C_{ts}$ follows from \eqref{eq:AtoC}. Under this specification, the matrices $B_{ts}$ defined by \eqref{eq:BtsopAMPlinmemory} can be expressed as $B_{ts} = \Pi_t b_{ts}$
where the scalars $b_{ts}$ are given by \eqref{eq:bts_projAMP}. Having established this connection, the desired convergence follows directly from the Theorem~\ref{thm:opAMPlinmemory} and the fact that projection matrices satisfy Assumptions~\ref{ass:Lt} and \ref{ass:Ats}.
\qed

\section{Equivalence of Different Modes of Convergence} \label{sec:modes_convergence}
This section provides some further results comparing various types of convergence guarantees.  

\begin{theorem}\label{thm:XnYnlim}
Let $\{X_n\}$ and $\{Y_n\}$ be two sequences of random elements with $X_n$ and $Y_n$ taking values in a complete separable normed space  $(S_n, \|\cdot\|_n)$ for each $n$. For $r \ge 1$ define the class of functions
\begin{align}
\mathsf{PL}_n(r)& = \left\{ f \colon S_n \to \bbR : (\forall x, y \in S_n)  (\exists L \in [0,\infty)) \quad |f(x) - f(y) | \le \|x- y\|_n ( 1 + \|x\|_n^{r-1} + \|y\|_n^{r-1} ) \right\}
\end{align}
Then, the implications (i) + (ii) $\implies$ (iii) $\implies$ (i) hold for the following statements:
\begin{enumerate}
  \item   There exists a sequence of couplings on $(X_n, Y_n)$ such that 
    \begin{align}
\|X_n - Y_n\|_n \xrightarrow[n \to \infty]{\mathrm{p} } 0 \label{eq:XnYnplim}
\end{align}
\item  $\{X_n\}$ (or $\{Y_n\}$) satisfies 
    \begin{align}
           \lim_{t \to \infty} \limsup_{n \to \infty}  \pr{ \|X_n\|_n \ge t}   = 0 \label{eq:Xntail}
    \end{align}

\item There exists a sequence of couplings on $(X_n, Y_n)$ such that, for every $r \ge 1$ and every sequence of functions $\{f_n \in \mathsf{PL}_n(r)\}$, 
    \begin{align}
| f(X_n) - f(Y_n)|  \xrightarrow[n \to \infty]{\mathrm{p} } 0 \label{eq:fXnfYnplim}
\end{align}
\end{enumerate}
\end{theorem}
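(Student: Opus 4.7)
I would prove the two implications separately; the first is routine and the second is where the real work lies.

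\textbf{Direction (i)+(ii)$\Rightarrow$(iii).} Start from the coupling supplied by (i), under which $\|X_n-Y_n\|_n \xrightarrow{\mathrm{p}} 0$. Condition (ii) is exactly stochastic boundedness of $\|X_n\|_n$, and the triangle inequality $\|Y_n\|_n \le \|X_n\|_n + \|X_n-Y_n\|_n$ transfers this tightness to $\|Y_n\|_n$. For any $r\ge 1$ and $\{f_n\}\in\mathsf{PL}_n(r)$, the defining pseudo-Lipschitz bound gives
\[
|f_n(X_n)-f_n(Y_n)| \;\le\; \|X_n-Y_n\|_n\bigl(1+\|X_n\|_n^{r-1}+\|Y_n\|_n^{r-1}\bigr),
\]
which is the product of an $o_P(1)$ factor and an $O_P(1)$ factor and hence is itself $o_P(1)$.

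\textbf{Direction (iii)$\Rightarrow$(i).} Retain the coupling from (iii) and probe it with the 1-Lipschitz distance-to-a-point maps $\phi_{z,n}(x)=\|x-z_n\|_n\in\mathsf{PL}_n(1)$ and their squares $\psi_{z,n}(x)=\|x-z_n\|_n^2\in\mathsf{PL}_n(2)$. For each deterministic sequence $\{z_n\}$ these yield $\bigl|\|X_n-z_n\|_n-\|Y_n-z_n\|_n\bigr|\xrightarrow{\mathrm{p}} 0$ and $\bigl|\|X_n-z_n\|_n^2-\|Y_n-z_n\|_n^2\bigr|\xrightarrow{\mathrm{p}} 0$. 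Subtracting the $z_n=0$ case from the second, then polarizing in the Hilbert-space case relevant to the AMP applications, isolates $|\langle X_n-Y_n,z_n\rangle_n|\xrightarrow{\mathrm{p}} 0$ along every deterministic $\{z_n\}$, i.e.\ a weak-type vanishing of the difference.

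\textbf{Main obstacle.} The hardest step in (iii)$\Rightarrow$(i) is that reverse-triangle saturation $|\|X-z\|_n-\|Y-z\|_n|=\|X-Y\|_n$ requires $z$ to lie on the line through $X,Y$ on the appropriate side, an $\omega$-dependent point that deterministic sequences $z_n$ cannot mimic; consequently, vanishing of all scalar pseudo-Lipschitz test functionals does not, on its own, obviously upgrade to norm convergence. I plan to bypass this either by (a) enlarging the probability space to host an auxiliary random element $Z_n$ of prescribed spreading law $\nu_n$ (e.g.\ isotropic Gaussian), applying (iii) to the deterministic family $z\mapsto\phi_{z,n}$ conditionally on $Z_n=z$, and invoking Fubini to show that $\int\bigl|\phi_{z,n}(X_n)-\phi_{z,n}(Y_n)\bigr|\,d\nu_n(z)$ dominates a positive multiple of $\|X_n-Y_n\|_n$ in probability; or (b) combining the vanishing of all deterministic linear and quadratic tests along a countable dense system with a separability plus diagonal-subsequence extraction, passing from weak-type convergence with matched second moments to norm convergence. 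I expect this direction, rather than the first, to be the main technical burden.
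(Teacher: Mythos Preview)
Your treatment of (i)+(ii)$\Rightarrow$(iii) is correct and matches the paper's argument essentially verbatim.

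For (iii)$\Rightarrow$(i), however, you have taken a much harder road than necessary, and neither of your proposed fixes (a) or (b) is convincing as stated. The key observation you are missing is that (i) only asks for the \emph{existence} of a coupling; it does not require that the coupling furnished by (iii) itself witness $\|X_n-Y_n\|_n\to 0$. The paper exploits this freedom via Kantorovich--Rubinstein duality: bounded-Lipschitz functions (with respect to the truncated metric $\min\{\|x-y\|_n,1\}$) form a subclass of $\mathsf{PL}_n(1)$, so under the coupling from (iii) one has $|\E[f(X_n)]-\E[f(Y_n)]|\to 0$ uniformly over $f\in\mathsf{BL}_n$. Kantorovich--Rubinstein then converts this into the existence of a (possibly different) coupling with $\E[\min\{\|X_n-Y_n\|_n,1\}]\to 0$, and Markov's inequality finishes. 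This is a few lines and works in any Polish normed space, matching the stated generality of the theorem.

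By contrast, your plan tries to force the \emph{given} coupling to satisfy (i), which is genuinely harder and perhaps false in general. Approach (b) has a real gap: deterministic linear tests give only weak-type convergence $\langle X_n-Y_n,z_n\rangle\to 0$, and the ``matched second moments'' you extract are $\|X_n\|_n^2-\|Y_n\|_n^2\to 0$, not $\|X_n-Y_n\|_n^2\to 0$; weak convergence to zero plus equality of norms of $X_n$ and $Y_n$ does not imply $X_n-Y_n\to 0$ in norm (think of orthonormal sequences). Approach (a) may be salvageable in specific Hilbert settings, but the claimed lower bound $\int|\,\|X_n-z\|-\|Y_n-z\|\,|\,d\nu_n(z)\gtrsim \|X_n-Y_n\|_n$ is not obvious and typically degrades with dimension for isotropic $\nu_n$; in any case it requires inner-product structure, whereas the theorem is stated for general normed spaces.
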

\begin{proof}
[Proof: (i)  + (ii) $\implies$ (iii)] Let $\{(X_n, Y_n)\}$ be a sequence of couplings such that \eqref{eq:XnYnplim} holds. We begin by showing that $\{Y_n\}$ also satisfies the tail condition given in \eqref{eq:Xntail}. 
For any $t \ge 0$, the triangle inequality and the union bound give
\begin{align}
    \pr{ \|Y_n\|_n \ge t}  \le   \pr{ \|X_n - Y_n\|_n \ge t/2} +  \pr{ \|X_n\|_n \ge t/2}
\end{align}
Hence, by \eqref{eq:XnYnplim} and \eqref{eq:Xntail}, 
\begin{align}
    \lim_{t \to \infty} \limsup_{n \to \infty}  \pr{ \|Y_n\|_n \ge t}   = 0 \label{eq:Yntail}
\end{align}

Next, recall that by definition,  each $f \in \mathsf{PL}_n(r)$ satisfies
\begin{align}
    |f(X_n) - f(Y_n)|\le \| X_n - Y_n\|_n ( 1 + \|X\|_n^{r-1} + \|Y\|_n^{r-1} )
\end{align}
For any $\eps > 0$, the tail conditions \eqref{eq:Xntail} and \eqref{eq:Yntail} ensure that, for any choice of $r \geq 1$, there exist finite positive numbers $N$ and $t$  such that, for all $n \ge N$, 
\begin{align}
\pr{ \|X_n\|_n^{r-1}  +  \|Y_n\|_n^{r-1} > t}  \le \pr{ \|X_n\|_n^{r-1} > t/2}  + \pr{ \|Y_n\|_n^{r-1} > t/2} \le \eps 
\end{align}
Furthermore, by \eqref{eq:XnYnplim}, there exists a finite positive number $N'$ such that, for all $n \ge N'$, 
\begin{align}
  \pr{ \| X_n - Y_n\|_n > \frac{\eps}{1+2t} } \le \eps
\end{align}
By the union bound, it follows that for all $n \ge\max\{ N, N'\} $. 
\begin{align}
 \pr{ |f(X_n) - f(Y_n)| > \eps} \le \pr{ \| X_n - Y_n\|_n > \frac{\eps}{1+2t} } +  \pr{ \|X_n\|_n^{r-1}  +  \|Y_n\|_n^{r-1} > t} \le  2\eps
\end{align}
and thus condition (iii) holds.
\end{proof}

\begin{proof}[Proof: (iii) $\implies$ (i)] Let $\{(X'_n, Y'_n)\}$ be a sequence of couplings such that \eqref{eq:fXnfYnplim} holds. For each $n$, define the class of bounded-Lipschitz functions
\begin{align}
\mathsf{BL}_n& \coloneqq \big\{ f \colon S_n \to \bbR :  \quad (\forall x, y \in S_n) \quad |f(x) - f(y) | \le  \min\left\{  \|x-y\|_n, 1 \right\}  \big\}
\end{align}
By the the  Kantorovich-Rubinstein theorem \cite{villani:2009}, the minimum over all couplings w.r.t.\ to the metric $\min\left\{  \|x-y\|_n, 1 \right\}$ is equal to the integral probability metric defined by the class $\mathsf{BL}_n$, that is
\begin{align}
    \min_{\text{couplings of } (X_n,Y_n) } \ex{  \min\left\{\|X_n- Y_n\|_n, 1 \right\}}  & = \sup_{ f \in \mathsf{BL}_n}  \left|\ex{ f(X_n)}  -  \ex{ f(Y_n)}     \right|
\end{align}
To bound the difference in expectations on the RHS, we can fix any $\eps > 0$ and use the coupling $(X'_n, Y'_n)$ to write
\begin{align}
\left|\ex{ f(X_n)}  -  \ex{ f(Y_n)}     \right| & = \left|\ex{ f(X'_n) -  f(Y'_n)}     \right| \\
& \le  \left|\ex{ (f(X'_n) - f(Y'_n))\one_{|f(X'_n)-f(Y'_n) | \le \eps}   } \right| \nonumber\\
& \quad + \left| \ex{ (f(X'_n) - f(Y'_n))\one_{|f(X'_n)-f(Y'_n) | > \eps }   }   \right|  \\
    &\le \eps +  \pr{|f(X_n')-f(Y_n') | > \eps }  \\
    &\le \eps +  \sup_{f \in \mathsf{PL}_n(r) } \pr{|f(X'_n)-f(Y'_n) | > \eps }
\end{align}
where the last step follows from  the inclusion $\mathsf{BL}_n \subset \mathsf{PL}_n(r)$. By  \eqref{eq:fXnfYnplim}, the second term on the RHS converges to zero as $n \to \infty$. Since $\eps > 0$ is arbitrary, this means that there exists a sequence of coupling $\{(X_n, Y_n)\}$ such that $
    \ex{  \min\left\{\|X_n- Y_n\|_n, 1 \right\}} \to 0$. By Markov's inequality, this implies \eqref{eq:XnYnplim} and so (i) holds. 
\end{proof}

\begin{theorem}\label{thm:XnYnGausslim}
Let $\{X_n\}$ and $\{Y_n\}$ be two sequences of $n \times d$ random matrices where $Y_n \sim \normal( \Sigma_n \otimes \Id)$ with $\sup_n \|\Sigma_n\|_{op} < \infty $.
The following statements are equivalent: 
\begin{enumerate}
    \item There exists a sequence of couplings on $(X_n, Y_n)$ such that 
    \begin{align}
\frac{ \|X_n - Y_n\|_F}{\sqrt{n}} \xrightarrow[n \to \infty]{\mathrm{p} } 0
\end{align}

\item For every $r \ge 1$ and every sequence of functions $\{ \psi_n \colon \bbR^{n \times d} \to \bbR\} $ such that
\begin{align}
  |\psi_n(X) - \psi_n(Y)| \le \frac{  \|X- Y\|_F}{\sqrt{n}} \left( 1 + \left( \frac{\|X\|_F}{\sqrt{n}}\right)^{r-1}  + \left( \frac{\|Y\|_F}{\sqrt{n}}\right)^{r-1}  \right) \qquad \forall X, Y \in \bbR^{n \times d}  \label{eq:psi_n}
\end{align}
the following convergence holds:
\begin{align}
 \left| \psi_n( X_n) - \ex{  \psi_n(Y_n) }\right |  \xrightarrow[n \to \infty]{\mathrm{p} } 0 \label{eq:XnYnpsilim}
\end{align}
\end{enumerate}
\end{theorem}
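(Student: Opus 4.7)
The plan is to prove the two implications separately. Direction (i) $\Rightarrow$ (ii) follows from a two-term decomposition combined with Gaussian concentration; direction (ii) $\Rightarrow$ (i) proceeds by a Kantorovich--Rubinstein dualization mirroring the proof of (iii) $\Rightarrow$ (i) in Theorem~\ref{thm:XnYnlim}.

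For (i) $\Rightarrow$ (ii), I would fix any $\{\psi_n\}$ satisfying \eqref{eq:psi_n} and write
\begin{align*}
  |\psi_n(X_n) - \ex{\psi_n(Y_n)}| \le |\psi_n(X_n) - \psi_n(Y_n)| + |\psi_n(Y_n) - \ex{\psi_n(Y_n)}|.
\end{align*}
The first term is controlled by \eqref{eq:psi_n}: it is bounded by $\|X_n - Y_n\|_F/\sqrt{n}$ times a polynomial of degree $r-1$ in $\|X_n\|_F/\sqrt{n}$ and $\|Y_n\|_F/\sqrt{n}$. Since $\ex{\|Y_n\|_F^2} \le n d\|\Sigma_n\|_{op}$ and higher Gaussian moments of $\|Y_n\|_F/\sqrt{n}$ are uniformly bounded in $n$, that polynomial factor is $O_p(1)$; the triangle inequality together with the coupling hypothesis give the same for $\|X_n\|_F/\sqrt{n}$, so the first term tends to zero in probability. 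For the second term, the Gaussian Poincar\'e inequality yields $\var(\psi_n(Y_n)) \le \|\Sigma_n \otimes \Id\|_{op}\,\ex{\|\nabla \psi_n(Y_n)\|^2}$, and pseudo-Lipschitzness gives $\|\nabla \psi_n(Y)\|^2 \lesssim n^{-1}(1+(\|Y\|_F/\sqrt{n})^{2(r-1)})$ almost everywhere; combined with the uniform bound on $\|\Sigma_n\|_{op}$ and Gaussian moment bounds, this produces $\var(\psi_n(Y_n)) = O(1/n)$, and Chebyshev's inequality closes the argument.

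For (ii) $\Rightarrow$ (i), I would introduce the bounded metric $d_n(X,Y) \coloneqq \min(\|X-Y\|_F/\sqrt{n},1)$ on $\bbR^{n \times d}$ and the associated optimal-transport cost $W_n$, defined as the infimum of $\ex{d_n(X_n,Y_n)}$ over couplings of $(X_n,Y_n)$. By the Kantorovich--Rubinstein theorem,
\begin{align*}
    W_n = \sup_{f \in \mathsf{BL}_n}\bigl|\ex{f(X_n)} - \ex{f(Y_n)}\bigr|, \qquad \mathsf{BL}_n \coloneqq \{f : |f(X)-f(Y)|\le d_n(X,Y)\}.
\end{align*}
Since $\min(a,1) \le a$, every $f \in \mathsf{BL}_n$ satisfies the pseudo-Lipschitz bound \eqref{eq:psi_n} with $r=1$, so (ii) is applicable. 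For any sequence $\{f_n\}$ with $f_n \in \mathsf{BL}_n$, subtracting the constant $f_n(0)$ (which alters neither the Lipschitz property nor the difference of expectations) yields $\|f_n\|_\infty \le 1$; the bounded convergence theorem then upgrades the convergence in probability furnished by (ii) to $|\ex{f_n(X_n)} - \ex{f_n(Y_n)}| \to 0$. A standard subsequence-plus-contradiction argument lifts this pointwise-in-sequence statement to the uniform statement $W_n \to 0$. Selecting a near-optimal coupling for each $n$ and applying Markov's inequality then produces a sequence of couplings under which $d_n(X_n,Y_n) \to 0$ in probability, which, given that $d_n$ truncates $\|X_n-Y_n\|_F/\sqrt{n}$ at $1$, is equivalent to (i).

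The principal obstacle is the subsequence upgrade in (ii) $\Rightarrow$ (i): hypothesis (ii) supplies control of $\ex{f_n(X_n)} - \ex{f_n(Y_n)}$ only along individually chosen sequences, whereas the duality requires uniform control $W_n \to 0$ over the entire class $\mathsf{BL}_n$. The key enabling observation is that $\mathsf{BL}_n$ is closed under translation, so normalization to $f_n(0)=0$ delivers the uniform bound $\|f_n\|_\infty \le 1$ needed to convert convergence in probability into convergence of means via dominated convergence; without this boundedness the conversion would fail. A minor technicality in the other direction is that the Gaussian Poincar\'e inequality is typically formulated for weakly differentiable integrands; since \eqref{eq:psi_n} implies local Lipschitzness, Rademacher's theorem supplies the requisite weak gradient almost everywhere with the claimed bound, or one may argue equivalently via a standard mollification.
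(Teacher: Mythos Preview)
Your proof is correct and follows essentially the same approach as the paper: the (i) $\Rightarrow$ (ii) direction is identical (pseudo-Lipschitz bound plus tightness for $|\psi_n(X_n)-\psi_n(Y_n)|$, Gaussian Poincar\'e for $|\psi_n(Y_n)-\ex{\psi_n(Y_n)}|$), and the (ii) $\Rightarrow$ (i) direction uses the same Kantorovich--Rubinstein dualization with the bounded-Lipschitz class that the paper employs in the (iii) $\Rightarrow$ (i) part of Theorem~\ref{thm:XnYnlim}. The only minor deviation is that for (ii) $\Rightarrow$ (i) the paper first applies Poincar\'e again to replace $\ex{\psi_n(Y_n)}$ by $\psi_n(Y_n)$ (thereby reducing to condition (iii) of Theorem~\ref{thm:XnYnlim}) before invoking the KR argument with a pre-existing coupling, whereas you bypass this step by centering the $\mathsf{BL}$ function and using bounded convergence to pass directly from $|f_n(X_n)-\ex{f_n(Y_n)}|\to 0$ in probability to $|\ex{f_n(X_n)}-\ex{f_n(Y_n)}|\to 0$; this is a small economy but not a substantively different argument.
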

\begin{proof}
We use Gaussian concentration to show  that the convergence in \eqref{eq:XnYnpsilim} implies the convergence given in \eqref{eq:fXnfYnplim}, where $\mathsf{PL}_n(r)$ the pseudo-Lipschitz function class defined with respect to $(S_n,\|\cdot\|_n)  = (  \bbR^{n \times d},  \| \cdot\|_F / \sqrt{n})$. To see this, observe that for any coupling  $X_n$ and $Y_n$ and for any $\eps > 0$, the triangle inequality yields
\begin{align}
 \pr{ \left| \psi_n( X_n) -   \psi_n(Y_n) \right | > \eps} & \le \pr{ \left| \psi_n( X_n) -   \ex{ \psi_n(Y_n) } \right | > \eps} +  \pr{ \left| \psi_n( Y_n) -   \ex{ \psi_n(Y_n)} \right | > \eps} 
\end{align}
Condition \eqref{eq:XnYnpsilim} implies that the first term on the r.h.s. converges to zero.  By the  Gaussian Poincaré inequality and the assumptions on $\psi_n$, it can be verified that   $\var( \psi_n(Y_n)) \le C(r)/n$ and thus the second term also converges to zero. Hence, condition (ii) implies that for any sequence of couplings and any sequence $\{\psi_n \colon \bbR^{n \times d} \to \bbR\}$ satisfying \eqref{eq:psi_n},  
\begin{align}
 \left| \psi_n( X_n) -  \psi_n(Y_n) \right |  \xrightarrow[n \to \infty]{\mathrm{p} } 0 
\end{align}

The desired results now follow straightforwardly from Theorem~\ref{thm:XnYnlim}, along  with the observation that the second moment constraint $ \ex{ \|Y_n\|_F^2} = n \|\Sigma_n\|^2_F$ combined with Markov's inequality implies the tail condition
\begin{align}
 \lim_{t \to \infty}   \limsup_{n\to \infty} \pr{ \frac{\|Y_n\|_F}{\sqrt{n}} > t} = 0 
\end{align}
\end{proof}

\section{Centering for the Spiked Matrix Model} \label{sec:AMPcentering}

This section outlines the centering arguments used for the spiked matrix model in Section~\ref{sec:stochastic_updates}. 
We recall the spiked matrix model and the projection AMP algorithm for the reader's convenience. 
The data matrix is a symmetric $n \times n$ matrix drawn from the spiked matrix model
\begin{align}
    M = \frac{\lambda}{n} \theta \theta^\top + Z; \quad Z \sim \mathsf{GOE}(n),
\end{align}
where $\theta \in \bbR^n$ is a fixed but unknown signal. A sequence  of estimate-iterate pairs  $\{(\hat{\theta}_t, x_t)\}$ is generated according to the projection AMP algorithm
\label{eq:thetatxt_proof}
    \begin{align}
    \hat{\theta}_t &= f_t(x_{t-1}) \, \\
    x_t &= \delta_t \circ \Big(M\hat{\theta}_t - \sum_{s<t} b_{ts} \hat{\theta}_s \Big ) + (\mathsf{1} - \delta_t) \circ x_{t-1}\ ,
\end{align}
with initialization $f_0 \equiv \hat{\theta}_0$. 

Our goal is to show that the sequence $\{x_t\}$ can be compared with the Gaussian sequence $\{y_t\}$, whose  (nonzero) mean and  covariance  are defined by the SE in \eqref{eq:spikedSE}. 
Setting $\mu_t = \ex{y_t}$ and $\Sigma_t = \cov(y_t)$, these SE update equations can be expressed compactly as
\label{eq:spikedSEcovProof}
\begin{align}
    \mu_t &=  \lambda r_t \delta_t \circ \theta + (\mathsf{1} - \delta_t) \circ \mu_{t-1}  \\
    \Sigma_t &=  \lambda q_t \diag(\delta_t) + (\Id_n - \diag(\delta_t)) \Sigma_{t-1}  .
\end{align}
where
\begin{align}
    q_t = \frac{1}{n} \ex{\|f_t(y_{t-1})\|^2} , \qquad  r_t = \frac{1}{n} \ex{\langle \theta, f_t(y_{t-1})}
\end{align}

The main difference between the projection AMP algorithm in \eqref{eq:spikedSEcovProof} and the recursion appearing in Theorem~\ref{thm:projAMP} is that the the data matrix $M$ has a nonzero mean. In order to apply the results in Theorem~\ref{thm:projAMP} we apply  centering argument that is standard in the AMP literature; see \cite{feng:2022}. Specifically, we show that that \eqref{eq:spikedSEcovProof} can be approximated by a modified recursion that is applied directly to the noise matrix $Z$. We emphasize that this new recursion is introduced primarily as proof technique. The centering transformation cannot be applied in practice, because it depends on the unknown signal $\theta$.  

While a variety of centering arguments can be applied, it is convenient to define the centered recursion in terms of the mean vector $\mu_t$ defined by \eqref{eq:spikedSEcovProof}. We consider the recursion on iterates $\{\tilde{x}_t\}$ given by
\begin{align}
    \tilde{x}_t = \delta_t \circ \Big(Z \tilde{f}_t( \tilde{x}_{t-1}) - \sum_{s<t} b_{ts} \tilde{f}_s(\tilde{x}_{s-1})  \Big) + (\mathsf{1} - \delta_t) \circ \tilde{x}_{t-1}
\end{align}
where $ \tilde{f}_t \colon \bbR^{n} \to \bbR^n$ is defined in terms of the pair $(f_t, \mu_t)$ according to 
\begin{align}
   \tilde{f}_t(\tilde{x}_{t-1} )  & = f_t(  \mu_t  +  \tilde{x}_{t-1}) 
\end{align}
This recursion satisfies the assumptions of Theorem~\ref{thm:opAMPlinmemory} and can thus be coupled with a zero-mean Gaussian process $\{\tilde{y}_t\}$, whose covariance satisfies the SE update equation in \eqref{eq:SEopAMPlinmemory}. Specializing these updates using the structure of the projection matrices, one finds that the covariance of $\{\tilde{y}_t\}$ matched to the covariance of the process $\{y_t\}$.  In particular, $\cov(\tilde{y}_t) =\Sigma_t$ for all $t$. 

From here, the remaining step is to argue that the shifted sequence defined by $\{ \mu_t + \tilde{x}_{t}\}$ is close in distribution to the sequence $\{x_t\}$ generated by \eqref{eq:thetatxt_proof}. This can be proved recursively using the approach described in \cite[Appendix~D]{gerbelot:2023} and leveraging the equivalence between convergence with respect to sequences of pseudo-Lipschitz test functions and the existence of a Gaussian coupling we show in Theorem~\ref{thm:XnYnGausslim}.

\section{Proof of Theorem~\ref{th:opampPMsingleLetter}} \label{sec:powMethSingleLetterProof}
Technically speaking, the sphere-projection denoisers in \eqref{eq:opampProjDenoiser} are not separable and the correction terms of \eqref{eq:projAMP} are replaced with the random quantities $\frac{1}{\sqrt{n}\|x_{t-1}\|} w_{ts}$. Here we argue by concentration that the recursion $\eqref{eq:opampProjDenoiser}$ is asymptotically close to a recursion with separable linear denoisers and exact correction coefficient. In particular, for a data matrix $M$, we will compare the recursions
\begin{align}
    x_t & = \delta_t \circ \frac{\sqrt{n}}{\|x_{t-1}\|} \Big( M  x_{t-1} - \frac{1}{n} \sum_{s<t} w_{st} \frac{\sqrt{n}}{\|x_s\|}x_s \Big) + (\mathsf{1}-\delta_t) \circ x_{t-1} \label{eq:opampProjTarget}\\
    \tilde x_t &= \delta_t \circ \frac{1}{\sqrt{\alpha_t}} \Big( M  \tilde x_{t-1} - \frac{1}{n} \sum_{s<t} w_{st} \frac{1}{\sqrt{\alpha_s}} \tilde x_s \Big) + (\mathsf{1}-\delta_t) \circ \tilde x_{t-1} \label{eq:opampProjComparison}
\end{align}   
where the sequence of scalars $\{\alpha_t\}$ is defined recursively in terms of the sequence of measures $\{p_t\}$ and overlaps $\{(q_t,r_t)\}$ generated by SE: 
\begin{align}
    \alpha_t \coloneqq \sum_{s<t} (\lambda^2 r_s^2 + q_s) p_t(s)
\end{align}
Recursion \eqref{eq:opampProjTarget} is just \eqref{eq:opampProjDenoiser}, and \eqref{eq:opampProjComparison} is a valid instance of the projection AMP recursion \eqref{eq:projAMP} with separable denoisers. By convention we set $\frac{\sqrt{n}}{\|x_{-1}\|}x_{-1} \equiv \frac{1}{\sqrt{\alpha_{-1}}}x_{-1}$ to be the same initialization for both recursions, such that Assumption~\ref{as:singLetterAssumptions} holds.

As a first step, we verify that the SE overlaps $\{(q_t,r_t)\}$ associated with \eqref{eq:opampProjComparison} is well-described in the asymptotic limit by the recurrence relation $\{\rho_t\}$ defined by \eqref{eq:PowMethSingleLetter}. For $t=0$, by construction the Gaussian approximation $\tilde y_0$ of $\tilde x_0$ has distribution $\tilde y_0 \sim \normal(\lambda r_0 \theta, \Id_n)$, and it is immediately verified that
\begin{align}
    q_0 \xrightarrow[n \rightarrow \infty]{} 1, \quad
    r_0 \xrightarrow[n \rightarrow \infty]{} \int u \hat u \, \mu(u,\hat u) \eqqcolon \rho_0
\end{align}
Let us assume by way of induction that, for some $t \in \bbN$ and all $s\leq t-1$, 
\begin{align}
    q_s \xrightarrow[n \rightarrow \infty]{} 1, \quad r_s \xrightarrow[n \rightarrow \infty]{} \frac{\lambda \sum_{s'<s} \rho_{s'}p_s(s')}{\sqrt{\sum_{s'<s} (\lambda^2\rho_{s'}^2 + 1) p_s(s')}} \eqqcolon \rho_s
\end{align}
For $t$, then, we study the limits of $(q_t, r_t)$. For $q_t$,
\begin{align}
    q_t & = \frac{1}{n\alpha_t} \ex{\|\tilde y_{t-1}\|^2}  \\
    & = \frac{1}{n \alpha_t} \sum_{i=1}^n \ex{y_{t-1,i}^2}  \\
    & = \frac{1}{n \alpha_t} \sum_{i=1}^n \ex{(\lambda r_{\tau(t,i)} \theta_i + \sqrt{q_{\tau(t,i)}} z_i)^2}  \\
    & = \frac{1}{\alpha_t} \Big( \frac{1}{n}\sum_{i=1}^n \lambda^2 r_{\tau(t,i)}^2 \theta_i^2  + \frac{1}{n} \sum_{i=1}^n q_{\tau(t,i)} \Big)
\end{align}
For each $s<t \in \bbN$, the indicator function $\one\{\tau(t,i)=s\}$ is a measurable function of $\{\delta_{si}\}_{s<t}$,
\begin{align}
    \one\{\tau(t,i)=s\} = \sum_{s<t} \delta_s (1- \delta_{s+1,i}) \dotsm (1-\delta_{t-1,i})
\end{align}
and therefore the summation
\begin{align}
    \frac{1}{n}\sum_{i=1}^n \lambda^2 r_{\tau(t,i)}^2 \theta_i^2 &= \frac{\lambda^2}{n}\sum_{i=1}^n  \Big(\sum_{s<t} r_s^2 \one\{\tau(t,i) = s\} \theta_i^2\Big) 
\end{align}
can be seen as an integral over the empirical measure of $(\theta, \delta_0, \dotsc, \delta_{t-1})$. Then, since by assumption this empirical measure converges to the product measure $\mu \otimes \nu_t$, in the limit the expectation factorizes and we obtain
\begin{align}
    \frac{1}{n}\sum_{i=1}^n \lambda^2 r_{\tau(t,i)}^2 \theta_i^2 \xrightarrow[n \rightarrow \infty]{} \lambda^2 \sum_{s<t} r_s^2 p_t(s) \int u \, d\mu(u, \hat u) = \lambda^2 \sum_{s<t} \rho_s^2 p_t(s)
\end{align}
By a similar argument,
\begin{align}
    \frac{1}{n} \sum_{i=1}^n q_{\tau(t,i)} \xrightarrow[n \rightarrow \infty]{} \sum_{s<t} p_t(s) = 1
\end{align}
Then, by the inductive hypothesis and continuity, we have
\begin{align} \label{eq:qtUnitConvergence}
    q_t = \frac{1}{\sum_{s<t} (\lambda^2 r_s^2 + q_s) p_t(s)} \Big( \frac{1}{n}\sum_{i=1}^n \lambda^2 r_{\tau(t,i)}^2 \theta_i^2  + \frac{1}{n} \sum_{i=1}^n q_{\tau(t,i)} \Big) \xrightarrow[n \rightarrow \infty]{} 1
\end{align}
and
\begin{align}
    r_t = \frac{1}{\sqrt{\sum_{s<t} (\lambda^2 r_s^2 + q_s) p_t(s)}} \Big( \frac{1}{n}\sum_{i=1}^n \lambda r_{\tau(t,i)} \theta_i^2 \Big) \xrightarrow[n \rightarrow\infty]{} \frac{\lambda \sum_{s<t}\rho_s p_t(s)}{\sqrt{\sum_{s<t} (\lambda^2 \rho_s^2 + 1) p_t(s)}} = \rho_t
\end{align}
and induction is complete. This implies that
\begin{align}
    \left| \frac{1}{n} \Big\langle \theta, \frac{\tilde x_t}{\sqrt{\alpha_t}} \Big\rangle - \rho_t \right| \xrightarrow[n \rightarrow \infty]{\mathrm{p}} 0
\end{align}
for all $t \in \bbN_0$, and therefore the separable AMP iterates $\{\tilde x_t\}$ satisfy the convergence result of Theorem~\ref{th:opampPMsingleLetter}. All that is left is to show that, for each $t$, $x_t$ and $\tilde x_t$ are asymptotically close, i.e.
\begin{align}
    \frac{\|x_t - \tilde x_t\|}{\sqrt{n}} \xrightarrow[n \rightarrow]{\mathrm{p}} 0
\end{align}
For $t=0$, the claim is obvious due to the matched initialization. By way of induction, assume this holds until some time $t$. Then, for $t+1$, we get
\begin{align}
    \|x_{t+1} - \tilde x_{t+1}\| &= \Bigg\| \delta_t \circ \Big\{ \frac{\sqrt{n}}{\|x_{t}\|} \Big( M  x_{t} - \frac{1}{n} \sum_{s\leq t} w_{st} \frac{\sqrt{n}}{\|x_s\|}x_s \Big) - \frac{1}{\sqrt{\alpha_{t+1}}} \Big( M  \tilde x_{t} - \frac{1}{n} \sum_{s\leq t} w_{st} \frac{1}{\sqrt{\alpha_s}} \tilde x_s \Big) \Big\} \nonumber \\
    &\qquad + (\mathsf{1} - \delta_t) \circ (x_t - \tilde x_t) \Bigg\| \\
    &\leq \left\| \frac{\sqrt{n}}{\|x_t\|} \Big( Mx_t - \frac{1}{n}\sum_{s \leq t} w_{ts} \frac{\sqrt{n}}{\|x_s\|} x_s \Big) - \frac{1}{\sqrt{\alpha_{t+1}}} \Big( M  \tilde x_{t} - \frac{1}{n} \sum_{s\leq t} w_{st} \frac{1}{\sqrt{\alpha_s}} \tilde x_s \Big) \right\| + \|x_t - \tilde x_t\|  \\
    & \leq \left\| \frac{\sqrt{n}}{\|x_t\|} \Big[\Big( Mx_t - \frac{1}{n}\sum_{s \leq t} w_{ts} \frac{\sqrt{n}}{\|x_s\|} x_s \Big) - \Big( M  \tilde x_{t} - \frac{1}{n} \sum_{s\leq t} w_{st} \frac{1}{\sqrt{\alpha_s}} \tilde x_s \Big)\Big] \right\|  \\
    & \quad + \Big| \frac{\sqrt{n}}{\|x_t\|} - \frac{1}{\sqrt{\alpha_{t+1}}} \Big| \, \left\| M  \tilde x_{t} - \frac{1}{n} \sum_{s\leq t} w_{st} \frac{1}{\sqrt{\alpha_s}} \tilde x_s \right\| + \|x_t - \tilde x_t\|
\end{align}
As a direct consequence of the inductive hypothesis, $\| x_t - \tilde x_t \|/\sqrt{n} \rightarrow 0$ as $n \rightarrow \infty$. Furthermore
\begin{align}
    \Big|\frac{1}{n}\|x_t\|^2 - \frac{1}{n} \E[\|\tilde y_t\|^2]\Big| \xrightarrow[n \rightarrow \infty]{\mathrm{p}} 0
\end{align}
which by continuity and \eqref{eq:qtUnitConvergence} implies that 
\begin{align}
    \Big| \frac{\sqrt{n}}{\|x_t\|} - \frac{1}{\sqrt{\alpha_{t+1}}} \Big| \xrightarrow[n \rightarrow \infty]{\mathrm{p}} 0
\end{align}
Then, since
\begin{align}
    \frac{1}{\sqrt{n}} \Big\| M  \tilde x_{t} - \frac{1}{n} \sum_{s\leq t} w_{st} \frac{1}{\sqrt{\alpha_s}} \tilde x_s \Big\|
\end{align}
is bounded with high probability, we conclude that
\begin{align}
     \Big| \frac{\sqrt{n}}{\|x_t\|} - \frac{1}{\sqrt{\alpha_{t+1}}} \Big| \, \frac{1}{\sqrt{n}}\left\| M  \tilde x_{t} - \frac{1}{n} \sum_{s\leq t} w_{st} \frac{1}{\sqrt{\alpha_s}} \tilde x_s \right\| \xrightarrow[n \rightarrow\infty]{\mathrm{p}} 0
\end{align}
Finally, we use the inductive hypothesis and the fact that all terms $\sqrt{n} / \|x_s\|$ are bounded in probability, for $s\leq t$, to establish
\begin{align}
    &\frac{1}{\sqrt{n}} \left\| \frac{\sqrt{n}}{\|x_t\|} \Big[\Big( Mx_t - \frac{1}{n}\sum_{s \leq t} w_{ts} \frac{\sqrt{n}}{\|x_s\|} x_s \Big) - \Big( M  \tilde x_{t} - \frac{1}{n} \sum_{s\leq t} w_{st} \frac{1}{\sqrt{\alpha_s}} \tilde x_s \Big)\Big] \right\| \\
    &\quad \leq \frac{\sqrt{n}}{\|x_t\|} \, \frac{1}{\sqrt{n}} \left\| M(x_t - \tilde x_t) - \frac{1}{n} \sum_{s \leq t} w_{ts} \Big( \frac{\sqrt{n}}{\|x_s\|}x_s - \frac{1}{\sqrt{\alpha_s}} \tilde x_s\Big) \right\| \\
    &\quad \leq \frac{\sqrt{n}}{\|x_t\|} \frac{1}{\sqrt{n}} \left\{ \|M\|_{op} \|x_t - \tilde x_t\| + \sum_{s \leq t} \frac{w_{ts}}{n} \Big\| \frac{\sqrt{n}}{\|x_s\|}x_s - \frac{1}{\sqrt{\alpha_s}} \tilde x_s \Big\| \right\} \xrightarrow[n \rightarrow \infty]{\mathrm{p}} 0
\end{align}
Thus, $\|x_{t+1} - \tilde x_{t+1}\| / \sqrt{n} \xrightarrow{\mathrm{p}}$ as $n \rightarrow \infty$. This concludes the induction and the proof of Theorem~\ref{th:opampPMsingleLetter}. \qed

\end{document}